\documentclass[12pt]{article} 
\usepackage{cmap}
\usepackage[top=2.5cm,bottom=2.5cm,left=2.5cm,right=2.5cm]{geometry}
\usepackage{fix-cm}
\usepackage[T1, T2A]{fontenc}
\usepackage[utf8]{inputenc}
\usepackage[ukrainian, english]{babel}
\usepackage{amsfonts,amsmath,amsthm,amssymb}
\usepackage{mathtools}

\usepackage{tikz}
\usetikzlibrary{positioning, intersections, arrows.meta, decorations.markings, shapes.geometric, arrows}

\usepackage{subcaption}
\usepackage{float}
\usepackage{csquotes}
\usepackage{enumitem}
\usepackage{comment}
\usepackage{adjustbox}
\usepackage[labelfont=bf, labelsep=period]{caption}

\usepackage{listings}
\usepackage{xcolor}
\usepackage{authblk}

\usepackage[pdftex,colorlinks,linkcolor=blue,citecolor=red,urlcolor=blue,hyperindex,plainpages=false,bookmarksopen,bookmarksnumbered,unicode]{hyperref}

\usepackage{booktabs}
\usepackage{multirow}
\usepackage{multicol}

\definecolor{processblue}{cmyk}{0.96,0,0,0}
\definecolor{codegreen}{rgb}{0,0.6,0}
\definecolor{codegray}{rgb}{0.5,0.5,0.5}
\definecolor{codepurple}{rgb}{0.58,0,0.82}
\definecolor{backcolour}{rgb}{0.95,0.95,0.92}
\definecolor{codegray}{rgb}{0.5,0.5,0.5}
\definecolor{keywordcolor}{rgb}{0.1,0.1,0.8}

\makeatletter 
\newcount\SOUL@minus
\makeatother  

\lstdefinestyle{mystyle}{
	backgroundcolor=\color{backcolour},   
	commentstyle=\color{codegreen},
	keywordstyle=\color{blue},
	numberstyle=\tiny\color{codegray},
	stringstyle=\color{codepurple},
	basicstyle=\ttfamily\footnotesize,
	breakatwhitespace=false,         
	breaklines=true,                 
	captionpos=b,                    
	keepspaces=true,                 
	numbers=left,                    
	numbersep=5pt,                  
	showspaces=false,                
	showstringspaces=false,
	showtabs=false,                  
	tabsize=2
}

\lstdefinelanguage{LP}{
	morekeywords={Maximize, Minimize, Subject, To, st, s.t., Bounds, Binaries, Generals, End},
	sensitive=false,
	morecomment=[l]{\\},
	basicstyle=\ttfamily\small,
	keywordstyle=\color{keywordcolor}\bfseries,
	commentstyle=\color{codegray}\itshape,
	numbers=none,
	numberstyle=\tiny\color{codegray},
	stepnumber=1,
	frame=single,
	breaklines=true,
	tabsize=2
}

\newcommand{\ceil}[1]{\left\lceil{#1}\right\rceil}
\newcommand{\floor}[1]{\left\lfloor{#1}\right\rfloor}
\newcommand{\ran}[2]{\{#1, \ldots, #2\}}

\newtheorem{theorem}{Theorem}[section]
\newtheorem{lemma}[theorem]{Lemma}
\newtheorem{proposition}[theorem]{Proposition}
\newtheorem{corollary}[theorem]{Corollary}

\theoremstyle{definition}

\theoremstyle{remark}

\begin{document}
	\large
	\title{\vspace{-2cm} There is no $8$-regular $K_3$-irregular graph}
	
	\author[1, 2]{Artem Hak \thanks{Corresponding author: artikgak@ukr.net}}
	\author[1, 2]{Sergiy Kozerenko}
	\author[2]{Andrii Serdiuk}
	
	\affil[1]{\footnotesize National University of Kyiv-Mohyla Academy, Skovorody str. 2, 04070 Kyiv, Ukraine.}
	
	\affil[2]{\footnotesize Kyiv School of Economics, Mykoly Shpaka str. 3, 03113 Kyiv, Ukraine.}
	
	\date{\vspace{-5ex}}
	
	\maketitle

    \begin{abstract}
    A graph is $K_3$-irregular if its vertices belong to pairwise distinct numbers of triangles. We prove that no $8$-regular $K_3$-irregular graph exists, settling the last unresolved case. Following the initial discovery of such graphs for regularities $r \in \{10,11,12\}$ (Stevanovi'c et al., 2024), our previous work (Hak et al., 2025) showed that no such graphs exist for $r \le 7$, provided the first example for $r=9$, and proved that any $8$-regular candidate must have between $17$ and $22$ vertices. We exclude these possible orders for $r=8$ by combining careful analysis of triangle degrees with integer linear programming techniques. Meanwhile, a recent construction (Zhang, 2026) established that regular $K_3$-irregular graphs do exist for all $r \ge 9$. Together with our results, this establishes that an $r$-regular $K_3$-irregular graph exists if and only if $r\geq 9$.
    \end{abstract}

    \noindent
	{\bf Keywords:} $F$-degree; triangle-distinct graph; irregular graph; regular graph; integer linear programming.
    
	\noindent
	{\bf MSC 2020:} 05C07, 90C10.
	
	\section{Introduction}

    A graph is regular if all its vertices have the same degree. At the other extreme, a simple graph with more than one vertex cannot have pairwise distinct vertex degrees. A different way to measure irregularity in a graph is to count copies of another fixed graph $F$ containing each vertex~\cite{Char:87}. The $F$-degree of a vertex $v$ is the number of subgraphs isomorphic to $F$ that contain $v$, and a graph is $F$-irregular if its vertices have pairwise distinct $F$-degrees.

    The ordinary vertex degree is the $K_2$-degree. The next case among complete graphs is the $K_3$-degree, which counts the triangles containing a vertex. In their 1988 paper, Chartrand, Erd\H{o}s, and Oellermann~\cite{Char-Erd-Oell:88} asked whether a graph can be both regular and $K_3$-irregular. Such graphs are also called regular triangle-distinct graphs: every vertex has the same number of neighbors, but belongs to a different number of triangles.

    The existence question remained open for decades. Meanwhile, Nair and Vijayakumar studied triangle degrees in a graph and its complement~\cite{Nair:94}, as well as triangle counts associated with edges~\cite{Nair:96}. More recently, Berikkyzy et al.~\cite{dist-triangle:2024} studied triangle-distinct graphs, determined the smallest such graph, and revisited the question of whether regular examples exist. 

    A related line of work studies $(r_2,r_3)$-constant graphs, in which every vertex has the same ordinary degree $r_2$ and belongs to the same number $r_3$ of triangles~\cite{HakReg:25,CaroMifsud2025,Caro2024}. When $r_3>0$, these are precisely vertex-girth-regular graphs of girth $3$, a class studied in the broader setting by Jajcay et al.~\cite{Jajcay2025}. The question considered here instead asks whether the triangle counts can be pairwise distinct while the ordinary degrees remain equal.

    The existence question was answered affirmatively in 2024 by Stevanovi\'c et al.~\cite{reg-triangle:24}, who found the first regular $K_3$-irregular graphs for $r\in\{10,11,12\}$. Their examples include a $10$-regular graph on $21$ vertices. Subsequently, Hak, Kozerenko, and Serdiuk~\cite{Hak:25} proved that no such graph exists for $r\leq 7$ and constructed a $9$-regular example on $24$ vertices. Using an evolutionary algorithm, they also found examples for every regularity $9\leq r\leq 30$.
    Recently, Zhang~\cite{Zhang:25} established the existence of regular $K_3$-irregular graphs for every $r\geq 9$, extending the previously known range through an infinite construction. 

	In our previous work~\cite{Hak:25}, we showed that an $8$-regular $K_3$-irregular graph, if one exists, must have order $17\leq n\leq 22$. The existence of such a graph remained open. Here we exclude all six possible orders by combining new structural bounds with an integer linear programming (ILP) formulation and exhaustive infeasibility computations. While the smaller orders ($17 \le n \le 19$) fall to initial eliminations, the larger orders ($20 \le n \le 22$) require our newly derived structural bounds on common neighborhoods to become computationally tractable. Consequently, we establish the following main result of this paper.

    \begin{theorem}\label{thm-main}
		There does not exist an $8$-regular $K_3$-irregular graph.
	\end{theorem}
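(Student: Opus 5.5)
The plan is to start from the result of our previous work~\cite{Hak:25}: an $8$-regular $K_3$-irregular graph $G$ must have order $17\le n\le 22$. It therefore suffices to rule out each of these six orders. Write $t(v)$ for the number of triangles containing $v$. Then $t(v)=e(N(v))$, the number of edges inside the neighborhood $N(v)$, and $0\le t(v)\le \binom{8}{2}=28$. Since the values $t(v)$ are pairwise distinct, they form an $n$-element subset of $\{0,\dots,28\}$. Moreover $\sum_v t(v)=3T$, where $T$ is the total number of triangles. This gives a divisibility constraint and also restricts which subsets can occur.

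\textbf{Structural restrictions on the triangle degrees.} I would first push the local analysis further. A vertex with $t(v)=28$ spans a $K_9$ together with its neighborhood. In an $8$-regular graph such a $K_9$ is a whole component, so all of its vertices share the value $28$, which is a contradiction. The same reasoning applied to values close to $28$ (for example $27$ and $26$, where $N(v)$ is $K_8$ minus one or two edges) forces several vertices of $N[v]$ to have nearly equal triangle counts. Each such forcing removes top values from the admissible set. Similarly, small values of $t(v)$ combined with large ones constrain the common neighborhoods: we have $t(v)=\tfrac12\sum_{u\in N(v)} |N(u)\cap N(v)|$, and $|N(u)\cap N(v)|\le 7$. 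From this I would derive bounds of the form ``if $t(v)\ge k$ then some neighbour $u$ has $|N(u)\cap N(v)|\ge c(k)$, hence $t(u)\ge f(k)$''. The goal of this step is to shrink the candidate value sets, and to obtain inequalities that can be fed to a solver for the larger orders, where brute force over subsets is too big.

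\textbf{ILP formulation.} For each $n$ I would set up a $0/1$ program. Edge variables $x_{uv}$ satisfy $\sum_u x_{uv}=8$. Triangle variables $y_{uvw}\le x_{uv},x_{vw},x_{uw}$ satisfy $y_{uvw}\ge x_{uv}+x_{vw}+x_{uw}-2$. Assignment variables $z_{v,k}$ encode $t(v)=k$, with $\sum_k z_{v,k}=1$, $\sum_v z_{v,k}\le 1$, and $\sum_{u,w} y_{uvw}=\sum_k k z_{v,k}$. Symmetry is broken by requiring that $t$ be increasing in the vertex label. I would then add the structural cuts from the previous step: forbidden values, and the common-neighbourhood bounds expressed via $\sum_w x_{uw}x_{vw}$ linearised through the $y$ variables. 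Finally, I would prove infeasibility for each $n$, splitting into subcases by the admissible value set when that helps.

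\textbf{Main obstacle.} I expect the smaller orders $17\le n\le 19$ to fall quickly, since the values must then be spread over few admissible slots and the sum constraint bites. The hard part is $20\le n\le 22$. There the value set is nearly all of the admissible range, the ILP has many symmetric near-solutions, and a plain solve is likely intractable. My first attempt would be to derive sharp common-neighbourhood bounds: for the vertices with the largest $t$, the neighbours must have large overlaps, and this forces other large-$t$ vertices nearby. I would then enumerate the finitely many configurations of the closed neighbourhood of the maximum-$t$ vertex, fix each as a partial assignment, and run the ILP on each case separately, checking infeasibility in every case.
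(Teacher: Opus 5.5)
Your framework matches the paper's for the easy orders. You reduce to $17\le n\le 22$, encode edges and McCormick-linearised triangle indicators in an ILP, enforce distinctness and $3\mid\sum_v t(v)$, and let a solver certify infeasibility. The paper closes $17\le n\le 19$ this way: it anchors an extreme vertex and shrinks the admissible range $\{\ell,\dots,u\}$ until pigeonhole or divisibility finishes it. Your top-value eliminations are already available, since $t(v)\le 22$ by Proposition~\ref{8-reg-upper-bound}.

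The gap is in $20\le n\le 22$. There the proof consists of a certified infeasibility, and you only assert that splitting on configurations of $N[v]$ for the maximum vertex will produce one. The paper found that single-anchor models do not scale here: one elimination at $n=20$ took 93 hours. These orders became tractable only by fixing the maximum-$t$ vertex $v_u$ and the minimum-$t$ vertex $v_\ell$ together. That step needs two ingredients absent from your plan:
\begin{enumerate}
\item Lemma~\ref{no-edge-lemma}: adjacent vertices satisfy $|t(u)-t(v)|\le 19$. Hence the extremes are non-adjacent, and $v_\ell$ can be placed in $B=V\setminus N[v_u]$.
\item Two-sided bounds on $c=|N(u)\cap N(v)|$ for non-adjacent $u,v$, namely $\binom{c}{2}+\lceil (8-c)/2\rceil\le 28-|t(u)-t(v)|$ and $2|t(u)-t(v)|\le (8-c)(n-18+c)+c\min\{6,8-c\}$. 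These pin $c$ to $\{2,3\}$ or $\{2,3,4\}$, so each exact-$c$ subproblem can be solved separately inside the range-shrinking DAG.
\end{enumerate}
Your structural step instead bounds the overlap between a high-$t$ vertex and \emph{some} neighbour. That constrains only the top of the range and never relates it to the bottom. Your proposed linearisation of $\sum_w x_{uw}x_{vw}$ through $y_{uvw}$ also vanishes whenever $x_{uv}=0$, because $y_{uvw}\le x_{uv}$. So it cannot even express the non-adjacent overlap constraints the paper relies on. In the paper this is trivial: with $v_u$ anchored, $|N(v_\ell)\cap A|=\sum_{a\in A}x_{v_\ell a}$ is linear.

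There is also a soundness problem in combining your two symmetry devices. Requiring $t$ to increase with the vertex label is valid only while all labels are interchangeable. Once labels are tied to $t$-values, the neighbourhood of the top vertex is an arbitrary $8$-subset of the remaining labels, carrying an arbitrary labelling of the configuration. Fixing it as a concrete partial assignment can therefore exclude genuine graphs, and an infeasibility verdict would certify nothing. The paper avoids this by ordering $t$ only inside $A$ and inside $B$, and inside the four subgroups once $v_\ell$ is fixed, with Big-$M$ disjunctions between groups. If you fix a specific induced graph on $N(v)$, you may break symmetry inside $N(v)$ only modulo the automorphisms of that graph.
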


	Combining Theorem~\ref{thm-main} with the known non-existence for $r \le 7$ \cite{Hak:25} and the existence constructions for $r \ge 9$ \cite{Hak:25, Zhang:25}, we obtain the exact threshold for the existence of regular triangle-distinct graphs.

    \begin{theorem}\label{thm-threshold}
		An $r$-regular $K_3$-irregular graph exists if and only if $r\geq 9$.
	\end{theorem}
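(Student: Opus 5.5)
The statement to prove is Theorem~\ref{thm-threshold}. It is a combination of three ingredients, and the plan is to assemble them directly. The ``only if'' direction says that no $r$-regular $K_3$-irregular graph exists for $r\le 8$. It splits into two ranges:
\begin{itemize}
\item for $r\le 7$ we invoke the non-existence result of~\cite{Hak:25};
\item for $r=8$ we invoke Theorem~\ref{thm-main}.
\end{itemize}
The degenerate cases also need a word. The one-vertex graph (with $r=0$) is vacuously $K_3$-irregular, so we adopt the convention, implicit in~\cite{Hak:25}, that graphs have at least two vertices, or equivalently that $r\ge 1$. For $r\le 1$ no vertex lies in a triangle, so two vertices would share the $K_3$-degree $0$. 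I would state this convention explicitly so that the ``only if'' direction is literally correct.

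For the ``if'' direction we need, for every $r\ge 9$, an $r$-regular graph whose vertices have pairwise distinct triangle counts. We cite two sources:
\begin{itemize}
\item the explicit $9$-regular example on $24$ vertices from~\cite{Hak:25}, together with the examples for $9\le r\le 30$ found there;
\item Zhang's infinite construction~\cite{Zhang:25}, which covers all $r\ge 9$.
\end{itemize}
Either source alone handles $r=9$, and~\cite{Zhang:25} handles the full infinite range. So no new construction is required.

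The only genuine content therefore sits in Theorem~\ref{thm-main}, which the paper proves separately. Since we may assume it, no step here is a real obstacle. The one point needing care is checking that the cited results use the same definitions: simple finite graphs, and $F$-degree counting the subgraphs isomorphic to $K_3$ through a vertex. With the definitions matched, the ranges $r\le 7$, $r=8$ and $r\ge 9$ partition all $r$, and the equivalence follows.
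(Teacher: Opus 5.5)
Your proposal is correct and matches the paper's argument exactly: Theorem~\ref{thm-threshold} follows by combining the non-existence result for $r\le 7$ from~\cite{Hak:25}, Theorem~\ref{thm-main} for $r=8$, and the constructions of~\cite{Hak:25,Zhang:25} for $r\ge 9$. The convention you propose to exclude the one-vertex graph is already part of the paper's standing assumption that all graphs have at least two vertices.
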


    The paper is organized as follows. Section~\ref{sec:preliminaries} introduces the necessary definitions and proves new structural properties, including neighborhood intersection bounds that narrow the search space. Section~\ref{sec:ilp-formulation} details the translation of these properties into an ILP formulation. Section~\ref{sec:search-results} presents the computational search strategy, including the base elimination technique and the two-vertex refinements, along with the results establishing the nonexistence of $8$-regular $K_3$-irregular graphs. Finally, Section~\ref{sec:conclusion} provides concluding remarks and open problems.
	
	\section{Preliminaries and Structural Properties}\label{sec:preliminaries}

    In this section, we provide the core graph-theoretic definitions and notation used throughout the paper. We review key previous results that frame our methodology, and derive structural properties and bounding techniques used to heavily constrain the search space in our ILP models.

    \subsection{Definitions}

    A graph is an ordered pair $G = (V, E)$ where $V = V(G)$ denotes the set of its \textit{vertices} and $E = E(G) \subseteq \binom{V}{2}$ is the set of its \textit{edges}. All graphs considered in this paper are finite, simple, and have at least two vertices. For brevity, an edge $\{u, v\}$ is denoted $uv$.

    Two graphs $G$ and $H$ are \textit{isomorphic} if there is a bijection $f: V(G) \rightarrow V(H)$ such that $uv \in E(G)$ $\iff$ $f(u)f(v) \in E(H)$.
    
    The \textit{neighborhood}, or \textit{open neighborhood}, of a vertex $v$ is the set of all its adjacent vertices: $N_G(v) = \{u \in V(G): uv \in E(G)\}$. The \textit{closed neighborhood} is the set $N_G[v] = N_G(v) \cup \{v\}$. When the graph is clear from the context, we omit the subscript $G$. Two distinct vertices $u, v$ are called \textit{true twins} if $N_G[u] = N_G[v]$, and \textit{false twins} if $N_G(u) = N_G(v)$. A \textit{complete graph} $K_n$ is a graph on $n$ vertices in which all vertices are pairwise adjacent.

    For any subset of vertices $S \subseteq V(G)$, we denote by $G[S]$ the subgraph induced by $S$, and we let $E(S) = E(G[S])$ denote the set of edges with both endpoints in $S$. For two disjoint subsets $S_1, S_2 \subseteq V(G)$, we denote by $E(S_1, S_2)$ the set of edges with one endpoint in $S_1$ and the other in $S_2$.
    
    For a vertex $v \in V(G)$, the \textit{degree} of $v$ is the cardinality of its neighborhood: $\deg_G(v) = |N_G(v)|$. A graph is called \textit{$r$-regular} if all its vertices have degree $r$. 
    
    The \textit{$K_3$-degree}, or \textit{triangle degree}, of a vertex $v$, denoted by $K_3\deg(v)$, is the number of subgraphs $H$, isomorphic to $K_3$, to which $v$ belongs. That is, $K_3\deg(v) = |E(N(v))|$. A graph is called \textit{$K_3$-irregular} if all its vertices have distinct $K_3$-degrees.

    In a $K_3$-irregular graph, we denote by $v_d$ the unique vertex of $K_3$-degree $d$, whenever such a vertex exists. Note that in the ILP formulation (Section~\ref{sec:ilp-formulation}), we re-index the vertices as $w_0, w_1, \ldots, w_{n-1}$ according to their structural role in ILP models (not by their $K_3$-degrees). In the computational results (Section~\ref{sec:search-results}), we revert to the $v_d$ notation, where the subscript denotes the $K_3$-degree.

    \subsection{Previous results}
	
	We recall several results from our previous work~\cite{Hak:25} that will be used in the ILP formulation and the subsequent eliminations.
	
	\begin{corollary}[Hak et al. \cite{Hak:25}]~\label{triangle-shake-lemma}
		For any graph, the sum of $K_3$-degrees of all of its vertices is divisible by $3$.
	\end{corollary}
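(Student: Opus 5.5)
The plan is a double count of vertex–triangle incidences. For a graph $G$, I would consider the set of pairs $(v,T)$ where $T$ is a subgraph of $G$ isomorphic to $K_3$ and $v\in V(T)$. Counting these pairs first by vertices gives $\sum_{v\in V(G)} K_3\deg(v)$, since by definition $K_3\deg(v)$ is the number of triangles containing $v$. Counting them instead by triangles, each triangle $T$ has exactly three vertices, so it contributes exactly $3$ pairs.

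The two counts give
\begin{equation*}
\sum_{v\in V(G)} K_3\deg(v) \;=\; 3\,t(G),
\end{equation*}
where $t(G)$ denotes the number of triangles in $G$. The right-hand side is a multiple of $3$, which proves the claim. This is the triangle analogue of the handshake lemma, which is why it fits the label \texttt{triangle-shake-lemma}.

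Equivalently, using the paper's identity $K_3\deg(v)=|E(N(v))|$, one can count pairs $(v,xy)$ with $xy\in E(N(v))$. Each such pair determines the triangle $vxy$, and each triangle arises from exactly three such pairs, one for each choice of $v$ among its vertices. This reproduces the same identity.

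The only point needing care is to make sure each triangle is counted once per vertex and not, for example, once per ordered pair of its other two vertices. Working with unordered edges $xy$ settles this, so I do not expect any real obstacle.
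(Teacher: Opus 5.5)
Your double count of vertex--triangle incidences, $\sum_{v} K_3\deg(v) = 3\,t(G)$, is correct. It is essentially the paper's own reasoning: the paper states this cited corollary without proof, but invokes it in the equivalent form $\sum_i d_i = 3T$, the triangle analogue of the handshake lemma.
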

	
	\begin{lemma}[Hak et al. \cite{Hak:25}]~\label{lemma-no-twins}
		If $G$ is regular $K_3$-irregular graph then $G$ has no true twins and no false twins.
	\end{lemma}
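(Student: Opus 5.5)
The plan is to argue by contradiction, using only regularity and the assumption that all $K_3$-degrees are distinct. Suppose $G$ is $r$-regular and $K_3$-irregular, and that it has two distinct vertices $u,v$ that are either true twins ($N[u]=N[v]$) or false twins ($N(u)=N(v)$). In each case I will show that $K_3\deg(u)=K_3\deg(v)$, which contradicts $K_3$-irregularity. The whole computation rests on the identity $K_3\deg(x)=|E(N(x))|$.

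\emph{False twins.} If $N(u)=N(v)$, then $E(N(u))=E(N(v))$, so the triangle degrees coincide immediately. Regularity is not even needed in this case.

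\emph{True twins.} If $N[u]=N[v]$ with $u\neq v$, then $u$ and $v$ are adjacent. Set $S=N[u]\setminus\{u,v\}$, so that $N(u)=S\cup\{v\}$ and $N(v)=S\cup\{u\}$. Then
\[
|E(N(u))| = |E(S)| + |N(v)\cap S| = |E(S)| + |S|,
\]
because $v$ is adjacent to every vertex of $S$. The same computation gives $|E(N(v))|=|E(S)|+|S|$. So again $K_3\deg(u)=K_3\deg(v)$, a contradiction.

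The argument therefore seems not to need regularity at all: twins have equal triangle degree in any graph. There is no real obstacle here. The only point requiring care is the true-twin case, where one must check that the neighbour of each twin contributes the same number of edges inside the other's neighbourhood. The decomposition through $S$ handles this cleanly.
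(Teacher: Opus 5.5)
Your proof is correct and complete. The paper does not prove this lemma itself; it is quoted from \cite{Hak:25}, so there is no in-paper argument to compare against. Your direct count of $|E(N(u))|$ and $|E(N(v))|$ through $S=N[u]\setminus\{u,v\}$ is a natural and sufficient argument.

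You are also right that regularity plays no role in the lemma. An even shorter route: if $u,v$ are true or false twins, the transposition swapping $u$ and $v$ is an automorphism of $G$. Hence $u$ and $v$ have the same $F$-degree for every graph $F$, in particular the same $K_3$-degree.

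Regularity matters only when the paper \emph{applies} the lemma, because there twinhood is deduced from a degree count. For instance, a vertex $b$ isolated in $G[B]$ has all its neighbours in $A$, and $N(b)=A=N(v)$ follows only because $\deg(b)=r=|A|$. Likewise, a vertex $a\in A$ adjacent to $v$ and to all of $A\setminus\{a\}$ satisfies $N[a]=N[v]$ only because it has no room for further neighbours.
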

	
	\begin{proposition}[Hak et al. \cite{Hak:25}]\label{8-reg-upper-bound}
		Let $G$ be an $8$-regular $K_3$-irregular graph. Then $K_3\deg(v)\leq 22$ for every $v\in V(G)$.
	\end{proposition}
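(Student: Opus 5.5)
The plan is to study the neighbourhood of a vertex with a large $K_3$-degree and show that, above $22$, its neighbours cannot all have distinct $K_3$-degrees. Fix an $8$-regular $K_3$-irregular graph $G$ and a vertex $v$ with $K_3\deg(v)=d$. Then $G[N(v)]$ has $d$ edges out of $\binom{8}{2}=28$. Let $M$ be the set of $m=28-d$ \emph{missing} pairs inside $N(v)$, and let $m_u$ be the number of missing pairs at a neighbour $u$. Such a $u$ has $7-m_u$ neighbours in $N(v)$, the neighbour $v$, and exactly $m_u$ neighbours outside $N[v]$.

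\textbf{Step 1: $d\le 24$.} If $m_u=0$ for some $u\in N(v)$, then $N[u]=N[v]$, so $u,v$ are true twins, contradicting Lemma~\ref{lemma-no-twins}. Hence $M$ covers all $8$ vertices of $N(v)$, which forces $m\ge 4$, i.e.\ $d\le 24$. This disposes of $d\in\{25,\dots,28\}$.

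\textbf{Step 2: exclude $d=24$.} Here $M$ is a perfect matching, so each $u\in N(v)$ has a unique partner $u'$ and a unique outside neighbour $x_u$. Counting edges of $G[N(u)]$:
\begin{itemize}[nosep]
\item edges from $v$ to the $6$ vertices of $N(v)\setminus\{u,u'\}$ contribute $6$;
\item edges among those $6$ vertices contribute $15-3=12$, since three matching pairs are missing;
\item edges from $x_u$ to them contribute $t_u\in\{0,\dots,6\}$.
\end{itemize}
There is no edge $vx_u$. So $K_3\deg(u)=18+t_u\in\{18,\dots,24\}$. The value $24$ is already taken by $v$, so the $8$ neighbours must take $8$ distinct values in $\{18,\dots,23\}$, which has only $6$ elements. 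This is a contradiction.

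\textbf{Step 3: exclude $d=23$ (main obstacle).} Now $M$ is a forest with $5$ edges covering $8$ vertices. Each component is a nontrivial star, and the only possibilities are $K_2\cup K_2\cup K_{1,3}$ and $K_2\cup P_3\cup P_3$. In each case I will compute $K_3\deg(u)$ for every $u\in N(v)$ exactly as in Step 2. The count is the edges from $v$ into $N(u)\cap N(v)$, plus the edges of $G[N(u)\cap N(v)]$ (read off from $M$), plus the edges from the $m_u$ outside neighbours of $u$ into $N(u)\cap N(v)$, plus the edges among those outside neighbours.

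For leaves of the stars ($m_u=1$) the bound is again of the form $c+t$ with $t\le 7-m_u$, where the constant $c$ depends on the star type. For the centre of $K_{1,3}$ ($m_u=3$), $|N(u)\cap N(v)|=4$, so its value is small. I expect the leaves to be squeezed into a window of consecutive values that is too short, or into one that collides with $23$. For example, the $K_2$-vertices lie in $\{18,\dots,24\}\setminus\{23\}$.

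The delicate part is the joint constraints: partners in $M$ see different subsets, and the outside neighbour of a leaf has only $8$ edges. These will be needed to finish the pigeonhole argument, possibly together with one further layer of counting at the outside neighbours. If this still leaves room, I would also use the absence of false twins from Lemma~\ref{lemma-no-twins}. Once $d\in\{23,24\}$ is excluded, the bound $K_3\deg(v)\le 22$ follows.
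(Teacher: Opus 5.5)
\textbf{Verdict: there is a genuine gap.} Your Steps~1 and~2 are correct. Step~3, the case $d=23$, is where the bound $22$ actually lives, and there you give only a plan. The pigeonhole you expect also does not close it. There are three concrete problems.

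\emph{The enumeration is incomplete.} A $5$-edge graph on $8$ vertices with no isolated vertex is a forest with three trees, of orders $(2,2,4)$ or $(2,3,3)$. A tree on $4$ vertices may be $P_4$, so $K_2\cup K_2\cup P_4$ must be handled as well.

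\emph{Your window is off by one.} Take $u\in N(v)$ with $m_u=1$, unique non-neighbour $u^*$ in $N(v)$, and outside neighbour $x_u$. The count gives $K_3\deg(u)=16+m_{u^*}+t_u$. So $K_2$-vertices have base $17$, not $18$. Steps~1--2 exclude $24$ for every vertex and $23$ is used by $v$, so $K_2$-vertices lie in $\{17,\dots,22\}$.

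\emph{Pigeonhole only settles one configuration.} With the correct bases it kills $K_2\cup K_2\cup K_{1,3}$: four vertices of base $17$ and three leaves of base $19$ cannot fit into the six values $\{17,\dots,22\}$. In $K_2\cup P_3\cup P_3$ and $K_2\cup K_2\cup P_4$, the six vertices with $m_u=1$ have bases $17,17,18,18,18,18$ and fit \emph{exactly} into $\{17,\dots,22\}$. No window argument can finish these; the ``joint constraints'' you defer are the whole difficulty.

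The missing idea is the edge budget $|E(A,B)|=r(r-1)-2d=56-46=10$ between $A=N(v)$ and the rest, used as in the proof of Lemma~\ref{no-edge-lemma}.
\begin{itemize}
\item Put $S_x=N(x)\cap A$, so that $t_u=|S_{x_u}|-1-[u^*\in S_{x_u}]$.
\item All six values $\{17,\dots,22\}$ are used and only $K_2$-vertices have base $17$, so $17$ goes to a $K_2$-vertex $p$. Then $t_p=0$ forces $S_{x_p}\subseteq\{p,p'\}$.
\item If $p'\in S_{x_p}$, then $x_{p'}=x_p$ and $p'$ also gets $17$. Hence $S_{x_p}=\{p\}$, and $9$ edges remain.
\end{itemize}

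\emph{The $P_4$ case.} The other three $K_2$-vertices have values $16+|S_{x_u}|$. So their outside neighbours are distinct, with sizes at least $2,3,4$. This exhausts the $9$ edges and pins their values to $18,19,20$. The two path ends must then take $21$ and $22$, but each has value at most $17+4=21$, a contradiction.

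\emph{The $P_3\cup P_3$ case.} Let $U_1$ be the five vertices with $m_u=1$ other than $p$. Their values sum to $18+\dots+22=100$. This forces $\sum_x |S_x|\cdot|S_x\cap U_1|\ge 16$. The collision rules are:
\begin{itemize}
\item two leaves of one path cannot share an outside neighbour;
\item $p'$ can share with a leaf only if that leaf's centre is absent from $S_x$;
\item leaves of different paths can share only if exactly one centre is present.
\end{itemize}
These give $|S_x\cap U_1|\le 2$, and $|S_x|\le 3$ when equality holds. A short computation then bounds the total by $15$, a contradiction.

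Without an argument of this kind, $d=23$ is not excluded and the proposition is not proved.
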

	
	\subsection{The partitioning technique and basic properties}\label{sec:part-tech}
	
	Throughout the proofs and ILP formulation, we rely on the vertex partitioning technique introduced in~\cite{Hak:25}. For a given $r$-regular $K_3$-irregular graph $G$ of order $n$, we can fix an arbitrary vertex $v \in V(G)$, and denote its triangle-degree by $d = K_3 \deg(v)$ and partition the vertex set $V(G)$ into three disjoint subsets: the singleton $\{v\}$, its open neighborhood $A=N(v)$, and the non-neighborhood $B = V(G) \setminus N[v]$.
	
	This naturally decomposes the edge set $E(G)$ into four sets:
	\[E(G) = E(\{v\},A) \sqcup E(A) \sqcup E(A,B) \sqcup E(B).\]
	
	By definition, the induced subgraph $G[A]$ contains exactly $|A|=r$ vertices and $|E(A)| = d$ edges. 
	Since the graph $G$ is $r$-regular, we have 
	\[\sum_{a \in A} \deg(a) = |A| \cdot r = r^2.\] 
	Next, we can calculate the number of edges between $A$ and $B$ by subtracting the edges incident to $v$ and those within $A$:
	\begin{equation*}
		|E(A, B)| = r^2 - r - 2d = r(r-1) - 2d.
	\end{equation*}
	
	Consequently, the size of $B$ and the number of edges within $B$ are given by:
	\begin{align*}
		&|B| = n - r - 1,\\
		&|E(B)| =\frac{nr}{2} - r - d - (r(r-1) - 2d) = \frac{nr}{2} - r^2 + d.
	\end{align*}

    By Lemma~\ref{lemma-no-twins}, neither $\overline{G[A]}$ nor $G[B]$ has an isolated vertex. Indeed, an isolated vertex in $\overline{G[A]}$ would be a true twin of $v$, whereas an isolated vertex in $G[B]$ would be a false twin of $v$.
    
	See Figure~\ref{fig-2} for the visualization of the partitioning technique.
	
	\begin{figure}[ht]
		\begin{tikzpicture}[auto,on grid, state/.style ={circle, top color =black , bottom color = black , draw, minimum width=1mm}, inner sep=1.5pt, scale = 1]
			
			\node[state, label={[xshift=-5pt]left:$v$}] (1) at (0, 0) {};
			
			\node[below left = 0.9cm and 0.35cm of 1] {\small $d = K_3 \deg(v)$};

			\draw[line width=0.5mm] (4, 0) ellipse (2cm and 2.5cm);
			\node[anchor=center] at (4, 0.5) {\small $|A| = r$};
			\node[anchor=center] at (4, -0.5) {\small $|E(A)| = d$};
			
			\node[below=1.25cm of 1] at (4, 4.5) {\large $A$};
			
			\draw[line width=0.5mm] (13, 0) ellipse (2cm and 2.5cm);
			\node[anchor=center] at (13, 0.5) {\small $|B| = n - r - 1$};
			\node[anchor=center] at (13, -0.5) {\small $|E(B)| = \frac{nr}{2} - r^2 + d$};
			
			\node[below=1.25cm of 1] at (13, 4.5) {\large $B$};
			
			\draw[line width=0.5mm] (6, -0.5) -- (11, -0.5);
			\node[] at (8.5, 0) {\large $\ldots$};
			\draw[line width=0.5mm] (6, 0.5) -- (11, 0.5);
			
			\node[] at (8.5, -1.5) {\small $|E(A,B)| = r(r - 1) - 2d$};
			
			\coordinate (T1) at (2.76, 1.95);
			\coordinate (T2) at (2.76, -1.95);
			\draw[line width=0.5mm] (1) -- (T1);
			\draw[line width=0.5mm] (1) -- (T2);
		\end{tikzpicture}
		\caption{The partitioning technique for $r$-regular $K_3$-irregular graphs.}\label{fig-2}
	\end{figure}

    \subsection{Neighborhood intersection bounds}\label{sec:advanced-props}

    We first bound the difference between the $K_3$-degrees of adjacent vertices in an arbitrary regular graph and then strengthen the bound for $8$-regular $K_3$-irregular graphs. We subsequently derive bounds on the common neighborhoods of non-adjacent vertices.

    \begin{lemma}\label{general-edge-bound}
		Let $G$ be an $r$-regular graph and let $uv \in E(G)$. Then 
        \[|K_3 \deg(u) - K_3 \deg(v)| \le \binom{r-1}{2}.\]
	\end{lemma}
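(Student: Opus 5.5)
Since $u$ and $v$ play symmetric roles, it suffices to show that $K_3\deg(u) - K_3\deg(v) \le \binom{r-1}{2}$. We write each triangle degree as the number of edges inside the corresponding neighborhood, $K_3\deg(u) = |E(N(u))|$, and split $N(u)$ according to how it sits relative to $v$. Since $uv \in E(G)$, we have $v \in N(u)$. Put $C = N(u)\cap N(v)$, the common neighbors, and $D = N(u)\setminus N[v]$, the neighbors of $u$ other than $v$ that are not adjacent to $v$. Then $N(u) = \{v\}\sqcup C\sqcup D$ with $|C|+|D| = r-1$, and symmetrically $N(v) = \{u\}\sqcup C\sqcup D'$ with $D' = N(v)\setminus N[u]$ and $|D'| = |D|$.

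Next we decompose the edge counts. The edges inside $N(u)$ are of four kinds: edges from $v$ to $C$ (there are exactly $|C|$ of these, because $v$ is adjacent to every vertex of $C$ and to none of $D$), edges inside $C$, edges between $C$ and $D$, and edges inside $D$. Hence
\[
K_3\deg(u) = |C| + |E(C)| + |E(C,D)| + |E(D)|,
\]
and symmetrically
\[
K_3\deg(v) = |C| + |E(C)| + |E(C,D')| + |E(D')|.
\]
Subtracting, the terms $|C|+|E(C)|$ cancel, leaving
\[
K_3\deg(u) - K_3\deg(v) \le |E(C,D)| + |E(D)|.
\]

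It remains to bound $|E(C,D)|+|E(D)|$. Let $c=|C|$ and $k=|D|$, so $c+k=r-1$. Every edge counted is an edge inside the set $C\cup D$ of size $r-1$ that has at least one endpoint in $D$. The number of such pairs is at most $\binom{r-1}{2}-\binom{c}{2}\le\binom{r-1}{2}$. This gives the claimed bound, and the symmetric argument handles the opposite sign.

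No step here is genuinely hard. The only point needing care is the bookkeeping: we must check that $v$ contributes exactly $|C|$ edges to $E(N(u))$ and $u$ contributes exactly $|C|$ edges to $E(N(v))$, so that these contributions cancel. The rest is the trivial bound by the number of pairs in an $(r-1)$-set, which in fact yields the slightly stronger bound $\binom{r-1}{2}-\binom{|C|}{2}$. Regularity is used only to make $|D|=|D'|$ and $|N(u)\setminus\{v\}| = r-1$.
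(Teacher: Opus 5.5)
Your proof is correct and follows the paper's argument essentially verbatim: the same split of each neighborhood into the common part $C$ and the private parts, the same cancellation of $|C|+|E(C)|$, and your bound $\binom{r-1}{2}-\binom{c}{2}$ equals the paper's intermediate quantity $c(r-1-c)+\binom{r-1-c}{2}$. As a minor aside, the equality $|D|=|D'|$ is never actually used; what matters is that $\deg(u)=r$, and $\deg(v)=r$ for the symmetric direction.
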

    \begin{proof}
		Without loss of generality assume $K_3 \deg(v) \ge K_3 \deg(u)$. Let $S = N(u) \cap N(v)$ with $c = |S|$, and define $A_u = N(u) \setminus (S \cup \{v\})$ and $A_v = N(v) \setminus (S \cup \{u\})$. Since $A_u \subseteq N(u) \setminus N(v)$ and $A_v \subseteq N(v) \setminus N(u)$, the sets $A_u$ and $A_v$ are disjoint, each of cardinality $r - 1 - c$.
        
		Partition $N(v) = \{u\} \sqcup S \sqcup A_v$. Every vertex of $S$ is adjacent to $u$, contributing $c$ edges, while no vertex of $A_v$ is adjacent to $u$.
        Hence
		\[
		K_3 \deg(v) = |E(S)| + |E(S, A_v)| + |E(A_v)| + c.
		\]
		Similarly, decomposing the neighborhood of $u$ gives 
        \[K_3 \deg(u) = |E(S)| + |E(S, A_u)| + |E(A_u)| + c \ge |E(S)| + c.\] 
        Therefore,
        \begin{align*}
            K_3 \deg(v) - K_3 \deg(u) \le |E(S, A_v)| + |E(A_v)| &\le c(r-1-c) + \binom{r-1-c}{2} 
            \\ = \frac{(r-1)(r-2) + c(1-c)}{2} &\le \binom{r-1}{2},
        \end{align*}
		where the last inequality holds because $c(1-c) \le 0$ for every $c \ge 0$.
	\end{proof}
    
    For adjacent vertices in an $8$-regular graph, Lemma~\ref{general-edge-bound} gives $|K_3\deg(u)-K_3\deg(v)|\leq \binom{7}{2} = 21$. For $8$-regular $K_3$-irregular graphs, the next lemma improves this bound to $19$.

\begin{lemma}\label{no-edge-lemma}
	Let $G$ be an $8$-regular $K_3$-irregular graph. If $u, v \in V(G)$ are adjacent, then $|K_3 \deg(u) - K_3 \deg(v)| \le 19$.
\end{lemma}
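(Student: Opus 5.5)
The plan is to sharpen the equality analysis in the proof of Lemma~\ref{general-edge-bound}, using the upper bound $K_3\deg\le 22$ from Proposition~\ref{8-reg-upper-bound} together with $K_3$-irregularity: at most three vertices can have $K_3$-degree in $\{20,21,22\}$, and more generally at most $23-t$ vertices can have $K_3$-degree at least $t$. Assume for contradiction that $uv\in E(G)$ and $K_3\deg(v)-K_3\deg(u)\ge 20$, and keep the notation $S$, $c$, $A_u$, $A_v$ of that proof with $r=8$. The chain of inequalities there gives
\[K_3\deg(v)-K_3\deg(u)\le 21+\tfrac{c(1-c)}{2},\]
so $c\le 2$. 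In each case the surplus over $20$ is at most $1$ (for $c\in\{0,1\}$) or exactly $0$ (for $c=2$). Hence all but at most one of the inequalities used must be tight. In particular, at most one edge of $E(S,A_v)\cup E(A_v)$ is missing, and $u$ gains at most one triangle beyond $|E(S)|+c$.

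The key step is to show that $N[v]$ is then almost a clique, and that this forces many vertices of large $K_3$-degree. Take $a\in A_v$ that is adjacent to all other vertices of $S\cup A_v$; by the tightness, all but at most two vertices of $A_v$ qualify. Such an $a$ is adjacent to $v$, to all of $S$ and to the other $6-c$ vertices of $A_v$, which accounts for $7$ of its $8$ neighbours. These $7$ vertices $\{v\}\cup S\cup (A_v\setminus\{a\})$ span all $21$ possible edges, except possibly edges inside $S$ and the single allowed missing edge. It follows that $K_3\deg(a)\ge 21-\binom{c}{2}-1$, which is at least $19$ or $20$. Also $K_3\deg(v)\ge 20$.

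The case counts then go as follows.
\begin{itemize}
\item For $c=2$ there is no slack. All five vertices of $A_v$ have $K_3$-degree at least $20$, and $K_3\deg(v)\ge 20$ as well. That gives six distinct values in $\{20,21,22\}$, which is impossible.
\item For $c\in\{0,1\}$, at least $7-c-2\ge 4$ vertices of $A_v$, together with $v$, have $K_3$-degree at least $20$. That already exceeds three.
\end{itemize}

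The main obstacle is the bookkeeping of the single unit of slack when $c\in\{0,1\}$ and the difference equals $20$ rather than $21$. The slack can sit in one of three places:
\begin{itemize}
\item a missing edge inside $A_v$;
\item a missing edge between $s$ and $A_v$;
\item an extra edge in $N(u)$, for instance in $E(A_u)$.
\end{itemize}
In each placement I would recount the vertices of $A_v$ whose neighbourhood still contains a $K_7$ or a $K_7$ minus one edge, giving $K_3$-degree at least $20$. I expect at least four such vertices to remain, plus $v$, which again contradicts the fact that only three values in $\{20,21,22\}$ are available. If some subcase is tighter than expected, I would additionally use that the eighth neighbour $x$ of such an $a$ lies outside $N[v]$ (when $x\ne u$), so $K_3\deg(a)$ equals the clique count plus $|N(x)\cap N(a)|$. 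I would also invoke Lemma~\ref{lemma-no-twins} to rule out the remaining near-clique configurations, in which vertices of $A_v$ would become true twins.
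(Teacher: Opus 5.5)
Your proof is correct, but it takes a different route from the paper's. Your bullet counts already finish the argument. The worst placement of the single unit of slack is a missing edge inside $A_v$, and that is exactly what ``at least $7-c-2$ qualifying vertices, each of $K_3$-degree at least $20$'' accounts for. If the slack is instead an extra triangle at $u$, then no edge is missing and every vertex of $A_v$ qualifies. So the closing ``main obstacle'' paragraph, and the planned appeal to Lemma~\ref{lemma-no-twins}, are unnecessary.

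\textbf{The paper's route.} It sets $U=N(v)\setminus\{u\}$ and observes directly that $|E(U)|=K_3\deg(v)-|S|\ge K_3\deg(v)-K_3\deg(u)\ge 20$, because $|S|\le K_3\deg(u)$. Hence $G[U]$ is $K_7$ minus at most one edge, and at least five vertices $w$ have full degree in $G[U]$. For each such $w$ it writes $K_3\deg(w)=|E(U)|+k_w-1$, where $k_w$ counts edges from the eighth neighbour $z_w$ back to $U$. Lemma~\ref{lemma-no-twins} gives $z_w\ne u$. Irregularity forces the $k_w\ge 1$ to be distinct, so at least $15$ edges must leave $U$. Only $9-|S|$ such edges exist, which is the contradiction.

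\textbf{Your route, compared.} You instead import the global bound $K_3\deg\le 22$ from Proposition~\ref{8-reg-upper-bound} and finish by pigeonhole on $\{20,21,22\}$. Your argument avoids the twin lemma and the edge count out of $U$, but it leans on a result proved in the earlier paper. The paper's argument is local and self-contained, using only regularity, irregularity and twin-freeness.

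\textbf{A simplification.} Once you allow Proposition~\ref{8-reg-upper-bound}, the $S$/$A_u$/$A_v$ slack bookkeeping is not needed. Each of the five full-degree vertices $w\in U$ has $\{v\}\cup(U\setminus\{w\})\subseteq N(w)$, and these seven vertices span $|E(U)|\ge 20$ edges. Together with $v$, that gives six vertices of $K_3$-degree in $\{20,21,22\}$, which is impossible.
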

\begin{proof}
	Without loss of generality, assume that there exist adjacent vertices $v$, $u$ with $K_3 \deg(v) - K_3 \deg(u) \ge 20$.
	
	Let $S = N(u) \cap N(v)$ with $|S| = s$, and put $U = N(v) \setminus \{u\}$, so $|U| = 7$. Partitioning $N(v) = \{u\} \sqcup U$, the edges in $G[N(v)]$ split into the $s$ edges from $u$ to $S$ (since $S \subseteq U$) and the $|E(U)|$ edges within $U$. Thus $K_3\deg(v) = s + |E(U)|$. Each vertex of $S$ forms a triangle with $u$ and $v$, so $s \le K_3\deg(u)$, giving 
    \[|E(U)| \ge K_3\deg(v) - K_3\deg(u) \ge 20.\] As $\binom{7}{2} = 21$, the subgraph $G[U]$ is the complete graph $K_7$ with at most one edge removed.
	
	Let $D = \{w \in U : \deg_U(w) = 6\}$. Since at most one edge is missing in $G[U]$, we have $|D| \ge 5$. Every vertex $w \in D$ is adjacent to all of $U \setminus \{w\}$ and to $v$, accounting for $7$ of its $8$ neighbors. Let $z_w \notin U \cup \{v\}$ denote the unique remaining neighbor of $w$ (note that this vertex depends on $w$). We claim $z_w \ne u$: if $z_w = u$, then $N[w] = N[v]$, making $v$ and $w$ true twins and contradicting Lemma~\ref{lemma-no-twins}.
	
	Now, let us compute the $K_3$-degree of $w \in D$. The neighborhood $N(w) = \{v\} \cup (U \setminus \{w\}) \cup \{z_w\}$. The edges in $G[N(w)]$ are:
	\begin{itemize}
		\item $6$ edges connecting $v$ to all of $U \setminus \{w\}$.
		\item $|E(U)| - 6$ edges within $U \setminus \{w\}$ (since $w$ has degree $6$ in $G[U]$).
		\item $k_w - 1$ edges from $z_w$ to $U \setminus \{w\}$, where $k_w \ge 1$ is the total number of edges from $z_w$ to $U$.
	\end{itemize}
    No edge joins $z_w$ to $v$, as shown above. Therefore, summing these yields $K_3\deg(w) = |E(U)| + k_w - 1$.
	
	Since $G$ is $K_3$-irregular, the values $K_3\deg(w)$ for $w \in D$ are pairwise distinct, and hence so are the values $k_w$. Consequently, the values $k_w - 1$ must be $5$ distinct non-negative integers. As each $k_w \ge 1$ and $|D| \ge 5$, summing the five smallest possible values gives
	\[
	\sum_{w \in D} k_w \ge 1 + 2 + 3 + 4 + 5 = 15.
	\]
	
	The sum $\sum_{w \in D} k_w$ represents the number of edges from the set $\{z_w\}_{w \in D}$ to $U$, which cannot exceed the total number of edges between $V(G) \setminus (U \cup \{u, v\})$ and $U$.
	
	The total number of external edges from $U$ is the sum of degrees in $U$ ($7 \times 8 = 56$) minus the internal edges ($2|E(U)|$) minus the edges to $v$ ($7$), minus the edges to $u$ ($s$). This leaves $49 - 2|E(U)| - s$ external edges. Since $|E(U)| \ge 20$, there are at most $49 - 40 - s = 9 - s$ external edges.
	
	We established that $\sum_{w \in D} k_w \ge 15$, but there are at most $9$ available external edges, which is a contradiction.
\end{proof}
    
	Having bounded the difference in $K_3$-degrees for adjacent vertices, we now turn to non-adjacent vertices. The following lemmas and their subsequent corollaries establish both upper and lower bounds on the size of their common neighborhood, parametrized by the difference between their $K_3$-degrees.

	\begin{lemma}\label{intersection-lemma}
		Let $G$ be an $r$-regular $K_3$-irregular graph, and let $u, v \in V(G)$ be non-adjacent vertices with $K_3$-degrees $d_1$ and $d_2$, respectively. If $c = |N(u) \cap N(v)|$, then		
		\[
			\binom{c}{2} + \ceil{\frac{r - c}{2}} \leq \binom{r}{2} - |d_1 - d_2|.
		\]
	\end{lemma}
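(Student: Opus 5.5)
The plan is to bound $|d_1-d_2|$ by counting edges inside the neighborhood of the vertex with the larger $K_3$-degree, in the same way as in Lemma~\ref{general-edge-bound}. The lower bound for the other vertex will come only from the common neighborhood.

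Without loss of generality assume $d_2 \ge d_1$, so $v$ is the vertex with the larger $K_3$-degree. Let $S = N(u)\cap N(v)$, so $|S| = c$, and let $A_v = N(v)\setminus S$, so $|A_v| = r-c$. Since $u$ and $v$ are non-adjacent, every edge of $G[S]$ lies in $G[N(u)]$, which gives $d_1 \ge |E(S)|$. For $v$, write $d_2 = \binom{r}{2} - m_v$, where $m_v$ is the number of non-adjacent pairs inside $N(v)$. Then
\[
d_2 - d_1 \le \binom{r}{2} - m_v - |E(S)|.
\]
Split the non-edges of $G[N(v)]$ into those inside $S$ and those meeting $A_v$. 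The non-edges inside $S$ together with $E(S)$ account for exactly $\binom{c}{2}$ pairs. So it suffices to show that at least $\ceil{\frac{r-c}{2}}$ non-edges of $G[N(v)]$ meet $A_v$.

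This is the key step, and it uses Lemma~\ref{lemma-no-twins}. Suppose some $a \in A_v$ were adjacent to every vertex of $N(v)\setminus\{a\}$. Since $a$ is also adjacent to $v$, we would have $N[v] \subseteq N[a]$. Both closed neighborhoods have size $r+1$ because $G$ is $r$-regular, so $N[a] = N[v]$. Then $a$ and $v$ would be true twins, which is impossible. Hence every vertex of $A_v$ is an endpoint of at least one non-edge of $G[N(v)]$. Each non-edge covers at most two vertices of $A_v$, so there are at least $\ceil{\frac{r-c}{2}}$ such non-edges, and none of them lies inside $S$. This gives $m_v + |E(S)| \ge \binom{c}{2} + \ceil{\frac{r-c}{2}}$, and substituting into the display above yields the claimed inequality.

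The one point to check is the degenerate case $c = r$, where $A_v$ is empty and the term $\ceil{\frac{r-c}{2}}$ vanishes. The argument still goes through trivially. (In fact $c=r$ would make $u$ and $v$ false twins, so it cannot occur, but the proof does not need this.) I expect no real obstacle beyond recognising that the $\ceil{\frac{r-c}{2}}$ term comes from the no-true-twins property applied inside $N(v)$.
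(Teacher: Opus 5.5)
Your proof is correct and is essentially the paper's argument with the roles of $u$ and $v$ swapped. You count the non-edges of $G[N(v)]$ for the vertex of larger $K_3$-degree, observe that those inside $S$ together with $E(S)$ make up $\binom{c}{2}$ pairs, use the no-true-twins lemma to get at least $\ceil{\frac{r-c}{2}}$ non-edges meeting $N(v)\setminus S$, and bound $|E(S)|$ by the smaller $K_3$-degree; your explicit remark that $N[v]\subseteq N[a]$ forces $N[a]=N[v]$ by regularity spells out a step the paper leaves implicit.
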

	
	\begin{proof}
		Let $X = N(u) \cap N(v)$, $p = |E(X)|$, and $Y = N(u) \setminus X$. Without loss of generality, assume $d_1 \geq d_2$. We calculate the number of missing edges in $G[N(u)]$ in two ways. On the one hand, we know that $|E_{\overline{G}}(N(u))| = \binom{r}{2} - d_1$. On the other hand, $|E_{\overline{G}}(N(u))| = |E_{\overline{G}}(X)| + |E_{\overline{G}}(X, Y)| + |E_{\overline{G}}(Y)|$. Recall that $|E_{\overline{G}}(X)| = \binom{c}{2} - p$. Thus,
		\[
			\binom{r}{2} - d_1 = \binom{c}{2} - p + |E_{\overline{G}}(X, Y)| + |E_{\overline{G}}(Y)|.
		\]
		Note that for every $y \in Y$, we have $d_{\overline{G}[N(u)]}(y) \geq 1$. Indeed, if there were a vertex $y$ with $d_{\overline{G}[N(u)]}(y) = 0$, this would imply that $y$ and $u$ are true twins, contradicting Lemma~\ref{lemma-no-twins}. Therefore, $\sum_{y \in Y}d_{\overline{G}[N(u)]}(y) \geq r - c$. We also have 
		\[
		\sum_{y \in Y}d_{\overline{G}[N(u)]}(y) = |E_{\overline{G}}(X, Y)| + 2 \cdot |E_{\overline{G}}(Y)|.
		\] 
		Since 
		\[
		|E_{\overline{G}}(X, Y)| + |E_{\overline{G}}(Y)| \geq \frac{|E_{\overline{G}}(X, Y)| + 2 \cdot|E_{\overline{G}}(Y)|}{2} \geq \frac{r - c}{2},
		\]
        and since $|E_{\overline{G}}(X, Y)| + |E_{\overline{G}}(Y)|$ is a non-negative integer, we conclude that
        \[|E_{\overline{G}}(X, Y)| + |E_{\overline{G}}(Y)| \geq \ceil{\frac{r-c}{2}}.\]
		Substituting this into the previous equality and rearranging gives
		\[
			\binom{c}{2} + \ceil{\frac{r - c}{2}} \leq \binom{r}{2} - d_1 + p.
		\]
		Finally, note that $d_2 \geq p$, which yields
		\[
			\binom{c}{2} + \ceil{\frac{r - c}{2}} \leq \binom{r}{2} - d_1 + d_2.
		\]
	\end{proof}

    Setting $r=8$ in Lemma~\ref{intersection-lemma} gives the following bounds.
    
	\begin{corollary}\label{intersection-corollary}
		Let $G$ be an $8$-regular $K_3$-irregular graph, and let $u, v \in V(G)$ be non-adjacent vertices with $K_3$-degrees $d_1$ and $d_2$, respectively. Let $c = |N(u) \cap N(v)|$. Then:
		\begin{itemize}
			\item If $|d_1 - d_2| \geq 21$, then $c \leq 3$.
			\item If $17 \leq |d_1 - d_2| \leq 20$, then $c \leq 4$.
		\end{itemize}
	\end{corollary}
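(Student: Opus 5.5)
The corollary follows directly from Lemma~\ref{intersection-lemma} with $r=8$. That lemma gives $f(c) := \binom{c}{2} + \ceil{\frac{8-c}{2}} \le 28 - |d_1-d_2|$. The plan is to tabulate $f(c)$ for $0\le c\le 8$ and read off which values of $c$ survive each threshold.

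The table is $f(0)=4$, $f(1)=4$, $f(2)=4$, $f(3)=6$, $f(4)=8$, $f(5)=12$, $f(6)=16$, $f(7)=22$, $f(8)=28$. Since $f$ is nondecreasing on $0\le c\le 8$ (strictly increasing from $c=2$ on), each constraint $f(c)\le K$ defines an initial segment of admissible values of $c$.

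For the first item, $|d_1-d_2|\ge 21$ gives $f(c)\le 7$. Since $f(3)=6\le 7<8=f(4)$, this forces $c\le 3$.

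For the second item, $17\le |d_1-d_2|\le 20$ gives at best $f(c)\le 11$, attained at $|d_1-d_2|=17$. Since $f(4)=8\le 11<12=f(5)$, this forces $c\le 4$. Larger differences in this range only tighten the bound.

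There is no real obstacle beyond checking the arithmetic at the boundary cases $c=3,4$ versus $c=4,5$ and making sure the ceiling is evaluated correctly for odd $8-c$.
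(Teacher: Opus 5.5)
Your proposal is correct and follows the same approach as the paper, which obtains the corollary by setting $r=8$ in Lemma~\ref{intersection-lemma}. Your table of $f(c)=\binom{c}{2}+\lceil (8-c)/2\rceil$ is right, and the threshold checks ($f(3)=6\le 7<8=f(4)$ and $f(4)=8\le 11<12=f(5)$) spell out the arithmetic that the paper leaves implicit.
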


    While the preceding results establish strict upper limits on the size of the common neighborhood, the next lemma provides a structural lower bound. Crucially, unlike the previous results, this bound also depends on the order $n$ of the graph.

    \begin{lemma}\label{intersection-lower-bound-lemma}
		Let $G$ be an $r$-regular $K_3$-irregular graph on $n$ vertices. Let $u, v \in V(G)$ be non-adjacent vertices with $K_3$-degrees $d_1$ and $d_2$, respectively. If $c = |N(u) \cap N(v)|$, then
		\[
			2|d_1 - d_2| \le (r-c)(n - 2r - 2 + c) + c \min \{r-2, r-c\}.
		\]
	\end{lemma}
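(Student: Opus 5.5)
We compare the two triangle counts through the identity $2K_3\deg(w)=\sum_{x\in N(w)}\deg_{G[N(w)]}(x)$, which holds because $K_3\deg(w)=|E(N(w))|$. Assume without loss of generality that $d_1\ge d_2$. Split the vertices around $u$ and $v$ into four sets:
\begin{itemize}
\item $X=N(u)\cap N(v)$, with $|X|=c$;
\item $Y_u=N(u)\setminus X$ and $Y_v=N(v)\setminus X$, each of size $r-c$;
\item $W=V(G)\setminus\bigl(N(u)\cup N(v)\cup\{u,v\}\bigr)$.
\end{itemize}
Since $uv\notin E(G)$, these sets together with $\{u,v\}$ partition $V(G)$. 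Hence $|W|=n-2-(2r-c)=n-2r-2+c$. We then write
\[
2d_1-2d_2=\Bigl(\sum_{x\in X}\deg_{N(u)}(x)-\sum_{x\in X}\deg_{N(v)}(x)\Bigr)+\Bigl(\sum_{y\in Y_u}\deg_{N(u)}(y)-\sum_{y\in Y_v}\deg_{N(v)}(y)\Bigr)
\]
and bound the two brackets separately.

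\emph{First bracket (the common neighbours).} For $x\in X$ we have $\deg_{N(u)}(x)=|N(x)\cap X|+|N(x)\cap Y_u|$ and $\deg_{N(v)}(x)=|N(x)\cap X|+|N(x)\cap Y_v|$. Their difference is therefore at most $|N(x)\cap Y_u|$. This quantity is at most $|Y_u|=r-c$. It is also at most $r-2$, because two of the $r$ neighbours of $x$ are $u$ and $v$, and neither lies in $Y_u$. So the first bracket is at most $c\min\{r-2,r-c\}$.

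\emph{Second bracket (the private neighbours).} Take $y\in Y_u$. It is adjacent to $u$ and not to $v$, so its remaining $r-1$ neighbours lie in $X\cup Y_u\cup Y_v\cup W$. Consequently $\deg_{N(u)}(y)=r-1-|N(y)\cap Y_v|-|N(y)\cap W|$. Summing over $Y_u$ gives
\[
\sum_{y\in Y_u}\deg_{N(u)}(y)=(r-c)(r-1)-|E(Y_u,Y_v)|-|E(Y_u,W)|.
\]
The symmetric computation over $Y_v$ gives
\[
\sum_{y\in Y_v}\deg_{N(v)}(y)=(r-c)(r-1)-|E(Y_u,Y_v)|-|E(Y_v,W)|.
\]
The terms $(r-c)(r-1)$ and $|E(Y_u,Y_v)|$ cancel, so the second bracket equals $|E(Y_v,W)|-|E(Y_u,W)|\le |E(Y_v,W)|\le (r-c)|W|=(r-c)(n-2r-2+c)$.

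Adding the two bounds gives the claimed inequality. The only delicate point is the bookkeeping in the second bracket: the edges between $Y_u$ and $Y_v$ must be seen to cancel exactly, leaving only edges into $W$. Everything else is a direct count. Note that regularity is used, but $K_3$-irregularity is not actually needed for this argument.
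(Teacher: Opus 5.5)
Your proof is correct and is essentially the paper's argument: both reach the identity $2(d_1-d_2)=|E(Y_v,W)|-|E(Y_u,W)|+|E(X,Y_u)|-|E(X,Y_v)|$ by summing degrees over the private neighbours, then drop the two negative terms and bound the other two exactly as you do. The only difference is bookkeeping: the paper substitutes $2|E(Y_u)|=2d_1-2|E(X)|-2|E(X,Y_u)|$, whereas you use the handshake identity in $G[N(u)]$. Your remark that $K_3$-irregularity is never used also holds for the paper's proof.
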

	
	\begin{proof}
		Without loss of generality, assume $d_1 \ge d_2$. Let $X = N(u) \cap N(v)$, $Y_u = N(u) \setminus X$, and $Y_v = N(v) \setminus X$. Let $B = V(G) \setminus (\{u, v\} \cup N(u) \cup N(v))$. Note that $|X| = c$ and $|Y_u| = |Y_v| = r - c$, so $|B| = n - 2r - 2 + c$.
		
		The $K_3$-degree of $u$ equals the number of edges in $G[N(u)]$, so $d_1 = |E(X)| + |E(Y_u)| + |E(X, Y_u)|$. Let us sum the degrees of the vertices in $Y_u$. Since $G$ is $r$-regular and each $y \in Y_u$ is adjacent to $u$ but not to $v$, there are $(r-1)|Y_u|$ edges connecting $Y_u$ to $V(G) \setminus \{u\}$. These edges can go to $X, Y_u, Y_v$, or $B$. Note that each edge within $Y_u$ contributes $2$ to the sum of degrees. Thus,
		\[
			|E(X, Y_u)| + 2|E(Y_u)| + |E(Y_u, Y_v)| + |E(Y_u, B)| = (r-1)(r-c).
		\]
		We can express $2|E(Y_u)|$ from the equation for $d_1$ as $2d_1 - 2|E(X)| - 2|E(X, Y_u)|$. Substituting this into the above gives
		\[
			2d_1 - 2|E(X)| - |E(X, Y_u)| + |E(Y_u, Y_v)| + |E(Y_u, B)| = (r-1)(r-c).
		\]
		By a symmetric argument for $v$ and $Y_v$, we have
		\[
			2d_2 - 2|E(X)| - |E(X, Y_v)| + |E(Y_u, Y_v)| + |E(Y_v, B)| = (r-1)(r-c).
		\]
		Subtracting the second equation from the first yields
		\[
			2(d_1 - d_2) = |E(Y_v, B)| - |E(Y_u, B)| + |E(X, Y_u)| - |E(X, Y_v)|.
		\]
		Since $|E(Y_u, B)| \ge 0$ and $|E(X, Y_v)| \ge 0$, we obtain the inequality
		\[
			2(d_1 - d_2) \le |E(Y_v, B)| + |E(X, Y_u)|.
		\]
		Now we find trivial upper bounds for the terms on the right-hand side. The number of edges between the disjoint sets $Y_v$ and $B$ is at most the product of their sizes, so $|E(Y_v, B)| \le |Y_v| \cdot |B| = (r-c)(n - 2r - 2 + c)$. 
		For the edges between $X$ and $Y_u$, observe that each of the $c$ vertices in $X$ is adjacent to both $u$ and $v$, leaving at most $r-2$ edges for the rest of the graph. At the same time, it can send at most $|Y_u| = r-c$ edges to $Y_u$. Thus, each vertex in $X$ sends at most $\min\{r-2, r-c\}$ edges to $Y_u$, which gives $|E(X, Y_u)| \le c \cdot \min(r-2, r-c)$. 
		
		Substituting these bounds yields the desired inequality.
	\end{proof}

    Setting $r=8$ in Lemma~\ref{intersection-lower-bound-lemma} gives the following bounds for $n=21$ and $n=22$.
    
	\begin{corollary}\label{intersection-lower-bound-corollary}
		Let $G$ be an $8$-regular $K_3$-irregular graph on $n$ vertices. Then, we have the following lower bounds on the cardinality of the common neighborhood $c = |N(u) \cap N(v)|$ for non-adjacent vertices $u, v$ with $K_3$-degrees $d_1, d_2$:
		\begin{itemize}
			\item If $n = 22$ and $|d_1 - d_2| \ge 21$, then $c \ge 2$.
			\item If $n = 21$ and $|d_1 - d_2| \ge 20$, then $c \ge 2$.
		\end{itemize}
	\end{corollary}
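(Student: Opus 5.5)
The plan is to specialize Lemma~\ref{intersection-lower-bound-lemma} to $r=8$ and show that the resulting inequality fails for $c=0$ and $c=1$ in each of the two stated cases. Setting $D=|d_1-d_2|$, the lemma gives
\[
2D \le (8-c)(n-18+c) + c\min\{6,\,8-c\}.
\]
The left-hand side is at least $42$ when $D\ge 21$ and at least $40$ when $D\ge 20$. So it suffices to show that the right-hand side is strictly smaller than these thresholds for $c\in\{0,1\}$. We only need to check these two values, since the claim is just $c\ge 2$.

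For $n=22$ the right-hand side is $(8-c)(4+c)+c\min\{6,8-c\}$. At $c=0$ this equals $32$. At $c=1$, where $\min\{6,7\}=6$, it equals $7\cdot 5+6=41$. Both values are less than $42\le 2D$, which is a contradiction, so $c\ge 2$.

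For $n=21$ the right-hand side is $(8-c)(3+c)+c\min\{6,8-c\}$. At $c=0$ this equals $24$. At $c=1$ it equals $7\cdot 4+6=34$. Both values are less than $40\le 2D$, so again $c\ge 2$.

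There is no real obstacle here. The only points needing care are evaluating $\min\{r-2,r-c\}$ correctly at $c=1$ (it is $r-2=6$, not $r-c=7$) and confirming that $|B|=n-18+c$ is nonnegative, which it is for these orders. It is worth noting that the $n=22$ bound at $c=1$ is tight against the threshold: $41<42$ holds by exactly one. This is why the first case requires $D\ge 21$ rather than $D\ge 20$.
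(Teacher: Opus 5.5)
Your proposal is correct and follows the paper's route exactly: substitute $r=8$ into Lemma~\ref{intersection-lower-bound-lemma} and check that the right-hand side falls below $2|d_1-d_2|$ for $c\in\{0,1\}$, giving $32$ and $41$ for $n=22$, and $24$ and $34$ for $n=21$. Your arithmetic, including $\min\{6,7\}=6$ at $c=1$, is right, and your observation that $41<42$ is what forces the threshold $21$ (rather than $20$) for $n=22$ is a correct sanity check.
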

    
	\section{Integer linear programming formulation}\label{sec:ilp-formulation}
	
	In this section we translate the problem of finding an $r$-regular $K_3$-irregular graph into an integer linear programming (ILP) feasibility problem. We encode the adjacency matrix, vertex degrees, triangle-degrees, and all structural constraints as linear equalities and inequalities over binary and integer variables.\footnote{Strictly speaking, the model is a mixed-integer linear program (MILP), combining binary variables (for edges and triangle indicators) with general integer variables (for $K_3$-degrees). Since every binary variable is integer-valued, we simply write ILP.}
	
	Since we are interested in feasibility rather than optimization, our ILP has no meaningful objective function; we use the trivial objective ``minimize zero''. A feasible solution corresponds to an $r$-regular $K_3$-irregular graph, while a proof of infeasibility certifies that no such graph exists for the given parameters.
	
	We wrote a \texttt{C++} program that generates a \texttt{.lp} file for the given set of parameters (order~$n$, regularity~$r$, and the admissible range of $K_3$-degrees). While \texttt{.lp} is a standard format accepted by most solvers, we use the SCIP Optimization Suite~\cite{SCIP} for all computations in this paper. The source code, generated \texttt{.lp} files, and solver logs are available in the accompanying repository~\cite{repo}.
	
	\subsection{Encoding a graph}
	
	Let $G$ be a graph with vertex set $V(G) = \{w_0, w_1, \ldots, w_{n-1}\}$ and edge set $E(G)$. 
	Since we consider simple undirected graphs, only the upper triangle $i < j$ of the adjacency matrix is needed to define $G$. 
	For each two-element subset $\{i,j\} \subseteq \ran{0}{n-1}$, we introduce a binary variable $x_{ij} \in \{0,1\}$ such that 
	\[x_{ij} = 1 \iff \{w_i, w_j\} \in E(G).\]
	Since indices form an unordered pair (and to simplify notation), $x_{ij}$ and $x_{ji}$ denote the same variable. This is our starting point; we use these edge variables to express all the required structural constraints as linear equations and inequalities. 
	
	\subsection{Regularity}
	
	To enforce that $G$ is $r$-regular, we require that every vertex has degree exactly~$r$:
	\[\sum_{\substack{j=0 \\ j \ne i}}^{n-1} x_{ij} = r, \qquad \text{for all } i \in \ran{0}{n-1}.\]
	In our ILP model, this yields exactly $n$ linear constraints (one per vertex).
	
	\subsection{Counting triangles}
	
	Encoding triangle-degrees is more involved. First, we introduce a binary indicator for every potential triangle: it equals~$1$ if and only if the corresponding three vertices form an induced $K_3$ subgraph in~$G$. Then, for each vertex, we sum the indicators of all triangles it participates in to obtain its $K_3$-degree.
	
	For each three-element subset $\{i, j, k\} \subseteq \{0, \ldots, n-1\}$, we introduce a binary variable
	\[t_{ijk} = 1 \iff G[\{w_i, w_j, w_k\}] \cong K_3.\]
	As with the edge variables, the indices of $t_{ijk}$ form an unordered set.
	A triangle on $w_i, w_j, w_k$ exists if and only if all three edges are present. We linearize the product $t_{ijk} = x_{ij} \cdot x_{ik} \cdot x_{jk}$ using the standard McCormick inequalities:
	\begin{align*}
		t_{ijk} &\le x_{ij},\\
		t_{ijk} &\le x_{ik},\\
		t_{ijk} &\le x_{jk},\\
		t_{ijk} &\ge x_{ij} + x_{ik} + x_{jk} - 2.
	\end{align*}
	
	For each vertex $w_i$, we introduce a general integer variable $d_i \ge 0$ representing its $K_3$-degree:
	\[d_i = \sum_{\substack{\{j, k\} \subset \ran{0}{n-1} \setminus \{i\}}} t_{ijk}, \qquad \text{for all } i \in \ran{0}{n-1}.\]
	
	We also introduce a general integer variable $T$ representing the total number of triangles in the graph. By Corollary~\ref{triangle-shake-lemma}, the sum of all $K_3$-degrees equals three times the total number of triangles:
	\[\sum_{i=0}^{n-1} d_i = 3T.\]
	This constraint naturally prunes search branches where the sum of $K_3$-degrees is not divisible by $3$. The bounds on $T$ are derived from the admissible range of $K_3$-degrees: 
	$T_{\min} = \ceil{\frac{\sum_{k=0}^{n-1} k + n \cdot \ell}{3}}$ and 
	$T_{\max} = \floor{\frac{n \cdot u - \sum_{k=0}^{n-1} k}{3}}$, where $\ell$ and $u$ are the minimum and maximum values in the  admissible range of triangle-degrees.
	
	\subsection{No true twins condition}
	
	By the twin-free property of $K_3$-irregular graphs (Lemma~\ref{lemma-no-twins}), the number of common neighbors of any pair of distinct vertices $w_i$ and $w_j$ is at most $r-2$. Since every common neighbor of $w_i$ and $w_j$ forms a triangle with them, we obtain the following upper bound for each $\{i, j\} \subseteq \ran{0}{n-1}$:
	\[\sum_{\substack{k \in \ran{0}{n-1}} \setminus \{i,j\}} t_{ijk} \le r-2.\]
	
	\subsection{Triangle irregularity}
	
	The key property of $K_3$-irregularity requires that all vertices have pairwise distinct $K_3$-degrees. If all vertices were a priori indistinguishable, the most efficient way to break symmetry and enforce distinctness would be to impose a global total ordering:
	\[d_0 < d_1 < \cdots < d_{n-1}, \qquad \text{i.e., } d_i - d_{i+1} \le -1 \text{ for } i=0, \ldots, n-2.\]
	
	However, the partitioning technique (see Section~\ref{sec:part-tech}) breaks this global symmetry by dividing the vertices into two distinct sets, $A$ and $B$. While the vertices within $A$ are indistinguishable from each other, as are vertices in $B$, a vertex in $A$ is structurally different from a vertex in $B$ (since it is adjacent to the anchor). Therefore, we cannot impose a global total ordering without potentially excluding valid graphs.
	
	Instead, we impose a total ordering strictly within $A$ and strictly within $B$. To enforce distinctness between any two vertices $w_i \in A$ and $w_j \in B$, we use the Big-$M$ method. For each such pair of vertices, we introduce a binary auxiliary variable $b_{ij} \in \{0,1\}$ and add the inequalities:
	\begin{align*}
		d_i - d_j - M \cdot b_{ij} &\le -1,\\
		d_j - d_i + M \cdot b_{ij} &\le M-1,
	\end{align*}
	where $M = u - \ell + 1$ (given the admissible range of $K_3$-degrees $\ran{u}{\ell}$), it is a sufficiently large constant that bounds the maximum possible difference between any two admissible $K_3$-degrees. The variable $b_{ij}$ acts as a switch: when $b_{ij} = 0$, the constraints force $d_i < d_j$; when $b_{ij} = 1$, they force $d_j < d_i$. In either case, $d_i \ne d_j$ is guaranteed. Finally, the bounds on each $K_3$-degree variable $d_i$ are set according to the current admissible range.
	
	Although replacing a global total ordering with pairwise Big-$M$ constraints generally makes the model computationally harder to solve, the massive reduction in the search space gained by fixing the anchor vertex (and later a second vertex) far outweighs this performance cost.
	
	\subsection{Fix one vertex --- the anchor}\label{sec:anchor}
	
	To exploit the structural decomposition introduced in Section~\ref{sec:part-tech}, we fix an \emph{anchor} vertex $w_0$ with a prescribed $K_3$-degree $d_0 = d$, and we partition the remaining vertices into $A = N(w_0) = \{w_1, \ldots, w_r\}$ and $B = V \setminus N[w_0] = \{w_{r+1}, \ldots, w_{n-1}\}$.
	
	This partition yields the following constraints:
	\begin{itemize}
		\item First, we fix the edge indicators incident to the anchor:
		\begin{align*}
			x_{0j} &= 1, \quad \text{for } 1 \le j \le r,\\
			x_{0j} &= 0, \quad \text{for } r+1 \le j \le n - 1.
		\end{align*}
		
		\item Second, we fix the exact number of edges within and between the parts:
		\begin{align*}
			\sum_{1 \le i < j \le r} x_{ij} &= d,\\
			\sum_{i=1}^{r} \sum_{j=r+1}^{n-1} x_{ij} &= r(r-1) - 2d,\\
			\sum_{r+1 \le i < j \le n-1} x_{ij} &= \tfrac{nr}{2} - r^2 + d.
		\end{align*}
	\end{itemize}
	In the special case $d = 0$, the induced subgraph $G[A]$ contains no edges, so we additionally fix $x_{ij} = 0$ for all $1 \le i < j \le r$. 
	
	Even with this machinery, we are already able to show that no $8$-regular $K_3$-irregular graphs exist for $17 \le n \le 19$. For harder cases, we will introduce additional structural constraints in Section~\ref{sec:fix-second}.

	\subsection{ILP verification}
	
	To ensure the correctness of our \texttt{C++} generator, we performed sanity checks using known valid graphs~\cite{ZenodoK3Irregular, Hak:25, reg-triangle:24}. For example, we generated the ILP instances -- typically for the smallest known example with $n=24$ and $r=9$ -- and provided the corresponding graph to the SCIP solver as an initial feasible solution (via an \texttt{.mst} file). To accomplish this, we wrote a function within the same project that takes a graph and outputs the values of its edge indicators. This process requires care: we reindexed the vertices of the known graph to match the specific constraints of the generated instance (such as the anchor vertex). Depending on the test, we assigned the appropriate vertex (with the given $K_3$-degree) to be the anchor $w_0 = v_d$, its neighbors to be $N(v_d) = \{w_1, \ldots w_r\}$, and the non-neighbors to $w_{r+1}, \ldots w_{n-1}$, exactly mirroring the expected structure. The solver successfully verifies the feasibility of this solution, confirming that our generated constraints correctly encode the problem. This approach helped to identify and fix bugs in the code early during the development stage. We used this same approach to validate the additional structural constraints introduced later in this paper.
	
	\section{Computation search strategy and results}\label{sec:search-results}

	All computations were performed on a Google Cloud Platform \texttt{c4-standard-8} virtual machine instance equipped with an Intel Granite Rapids processor. We configured the instance to use $4$ physical cores without hyperthreading and $30$~GB of RAM, running Debian 13 OS. The ILP models were solved using the SCIP Optimization Suite version~10.0.3. To ensure consistent benchmarking and prevent CPU context switching overhead, we used the \texttt{taskset} utility to pin each solver process separately to a dedicated physical core.
    
	The general strategy for each value of $n$ is as follows. We start with the full admissible range $\ran{0}{22}$ of $K_3$-degrees, as established in Proposition~\ref{8-reg-upper-bound}, and iteratively reduce this range from its boundaries.
	
	At each step, we perform the reduction by showing that the current maximum $u$ or minimum $\ell$ in the current range cannot exist in the graph. Thus, from a range $\ran{\ell}{u}$, we transition to either $\ran{\ell}{u-1}$ or $\ran{\ell+1}{u}$.
	
    To eliminate a candidate value~$d$, we designate a vertex with $K_3$-degree $d$ as the \emph{anchor} vertex, encode the resulting instance as an ILP, and verify infeasibility using the SCIP solver.
	This process terminates when the remaining range $\ran{\ell'}{u'}$ meets one of the following two stopping conditions:
	\begin{enumerate}
		\item \textbf{Pigeonhole principle:} The size of the range is strictly less than $n$ (i.e., $|\ran{\ell'}{u'}| < n$), making it impossible to assign $n$ distinct $K_3$-degrees to the vertices;
		\item \textbf{Divisibility violation:} The range is of size $n$ (i.e., $|\ran{\ell'}{u'}| = n$), but the sum of its values is not divisible by three, which contradicts Corollary~\ref{triangle-shake-lemma}.
	\end{enumerate}
	
	\subsection[Initial eliminations: n = 17, 18, 19]{Initial eliminations: $\boldsymbol{n = 17, 18, 19}$}
	
	In this subsection, we resolve the three smallest possible orders. The results of the ILP solver runs, including branch-and-bound tree statistics (maximum depth and node count), simplex iterations (LP iter), and runtimes, are summarized in Table~\ref{table-17-18-19}.
	
	\medskip
	\noindent\textbf{Case $\boldsymbol{n=17}$.}
	We start with the range $\ran{0}{22}$, and eliminate values from the top in sequence: $22$, $21$, $20$, $19$, $18$, and $17$. Each step fixes the current maximum as the anchor, and the solver confirms infeasibility very rapidly (in seconds for the easier cases and in less than three minutes for the hardest case). This reduces the range to $\ran{0}{16}$, which contains exactly $17=n$ values, and the sum $\sum_{k=0}^{16}k = 136$ is not divisible by three, so by Corollary~\ref{triangle-shake-lemma} no such graph exists.
	
	\medskip
	\noindent\textbf{Case $\boldsymbol{n=18}$.}
	Starting from $\ran{0}{22}$, we again eliminate extreme values from the top in sequence: $22$, $21$, $20$, $19$, $18$, and $17$. The computational effort increases noticeably as we reach tighter ranges (for instance, eliminating $18$ takes over an hour and explores hundreds of thousands of nodes). Thus, the range is reduced to $\ran{0}{16}$. It contains $17$ values, which triggers the pigeonhole principle stopping condition.
	
	\medskip
	\noindent\textbf{Case $\boldsymbol{n=19}$.}
	Starting from $\ran{0}{22}$, we eliminate $22$, $21$, and $20$ from the top, reducing the admissible range to $\ran{0}{19}$. At this point, attempting to eliminate the current maximum $19$ proved computationally expensive. Instead, we pivot to eliminate the minimum value~$0$, obtaining $\ran{1}{19}$. This range contains exactly $19$ values, and the sum $\sum_{k=1}^{19}k = 190$ is not divisible by three, which contradicts Corollary~\ref{triangle-shake-lemma}.
	Had we eliminated $19$ instead, the resulting range $\ran{0}{18}$ would contain $19$ values with a sum of $171$, which is divisible by three, thus requiring a one more step.
    
    As seen in Table~\ref{table-17-18-19}, while the fix one-vertex strategy successfully resolves $n \in \{17, 18, 19\}$, the exponential growth in branch-and-bound nodes and runtime makes individual eliminations prohibitively slow for larger values of $n$. We report these standard solver statistics (maximum tree depth, processed nodes, LP iterations, and runtime) not as a formal algorithmic benchmark, but merely to illustrate the steep escalation in computational effort required as the admissible range narrows. This computational bottleneck directly motivates the advanced symmetry-breaking techniques introduced in the next subsection to process the remaining orders $n \in \{20, 21, 22\}$.
    \begin{table}[ht]
        \centering
        \begin{tabular}{@{}lcrrr@{}}
        \toprule
            Evaluated range & Max Depth & Nodes & LP iter & Time (sec) \\
        \midrule
            \multicolumn{5}{@{}l}{\textbf{Case $\boldsymbol{n=17}$}} \\
            $\{0, \ldots, 21, \mathbf{\underline{22}}\}$ & 0 & 1 & 1218 & 0.12 \\
            $\{0, \ldots, 20, \mathbf{\underline{21}}\}$ & 0 & 1 & 1898 & 0.24 \\
            $\{0, \ldots, 19, \mathbf{\underline{20}}\}$ & 13 & 339 & 46228 & 4.61 \\
            $\{0, \ldots, 18, \mathbf{\underline{19}}\}$ & 22 & 1627 & 291602 & 19.66 \\
            $\{0, \ldots, 17, \mathbf{\underline{18}}\}$ & 28 & 5313 & 959783 & 63.59 \\
            $\{0, \ldots, 16, \mathbf{\underline{17}}\}^{[\text{stop}]}$ & 30 & 14632 & 2737k & 165.32 \\
        \midrule
            \multicolumn{5}{@{}l}{\textbf{Case $\boldsymbol{n=18}$}} \\
            $\{0, \ldots, 21, \mathbf{\underline{22}}\}$ & 0 & 1 & 1734 & 0.18 \\
            $\{0, \ldots, 20, \mathbf{\underline{21}}\}$ & 13 & 220 & 21787 & 3.38 \\
            $\{0, \ldots, 19, \mathbf{\underline{20}}\}$ & 28 & 7796 & 1101k & 86.19 \\
            $\{0, \ldots, 18, \mathbf{\underline{19}}\}$ & 26 & 6298 & 1133k & 80.38 \\
            $\{0, \ldots, 17, \mathbf{\underline{18}}\}$ & 46 & 241621 & 64528k & 4445.95 (74m) \\
            $\{0, \ldots, 16, \mathbf{\underline{17}}\}^{[\text{stop}]}$ & 42 & 249219 & 63249k & 4143.54 (69m) \\
        \midrule
            \multicolumn{5}{@{}l}{\textbf{Case $\boldsymbol{n=19}$}} \\
            $\{0, \ldots, 21, \mathbf{\underline{22}}\}$ & 27 & 4148 & 739627 & 63.75 \\
            $\{0, \ldots, 20, \mathbf{\underline{21}}\}$ & 56 & 319599 & 75991k & 5749.65 (96m) \\
            $\{0, \ldots, 19, \mathbf{\underline{20}}\}$ & 62 & 2156335 & 586632k & 44155.66 (12h) \\
            $\{\mathbf{\underline{0}}, 1, \ldots, 19\}^{[\text{stop}]}$ & 40 & 96052 & 46679k & 3085.05 (51m) \\
        \bottomrule
        \end{tabular}
        \caption{Computational results for $17 \le n \le 19$.}\label{table-17-18-19}
    \end{table}
    
	\subsection{Fixing a second vertex in the ILP}\label{sec:fix-second}
	
	For $n \ge 20$, individual eliminations require prohibitively long runtimes. To overcome this, we adopt a more sophisticated strategy: we embed the structural restrictions derived in Section~\ref{sec:advanced-props} as constraints in the ILP model, allowing us to split the hardest computations into smaller subproblems.
	
	The core technique is to \emph{fix two vertices} simultaneously---specifically, those realizing the current maximum $K_3$-degree ($u$) and minimum $K_3$-degree ($\ell$). This allows us to fix more structure and impose more conditions on the neighborhoods of the two vertices, which significantly reduces the solver's search tree.
    
	However, there is a trade-off: when the solver proves infeasibility with two vertices fixed, we merely show that these two specific vertices \emph{cannot coexist} in the graph. We cannot eliminate both values at once. Instead, we must branch our computation into two independent subproblems: one where $u$ is eliminated ($\ran{\ell}{u-1}$), and another where $\ell$ is eliminated ($\ran{\ell+1}{u}$). Both branches must then be processed further.

    Before encoding the second vertex, we must first restrict the possible ways these two fixed vertices can be positioned in the graph. By Lemma~\ref{no-edge-lemma}, if the difference in $K_3$-degrees is large enough (which is always true for the extreme values in our remaining cases), the two vertices cannot be adjacent. Thus, we can safely place the bottom-most vertex in the non-neighborhood part $B$ of the top-most vertex.
    We usually anchor the top-most vertex as the anchor (from our experiments, computation is faster in these settings). 
    
    Further, by Lemma~\ref{intersection-lemma} we impose constraints on the neighborhood of $v_\ell$, which is split between $A$ and $B$. Notice that $N(v_\ell) \cap A$ is exactly the common neighborhood $N(v_\ell) \cap N(v_u)$. We denote the size of this common neighborhood by $c = |N(v_u) \cap N(v_\ell)|$. By Corollary~\ref{intersection-corollary} and Corollary~\ref{intersection-lower-bound-corollary}, $c$ is strictly bounded: $c_{\min} \le c \le c_{\max}$.
    Consequently, the number of neighbors of $v_\ell$ in $B$ is exactly $r - c$, which gives a strict lower bound $|N(v_\ell) \cap B| \ge r - c_{\max}$ (see Figure~\ref{fig:part_bounds}).
    
	It is important to emphasize that imposing the lower bound $|N(v_\ell) \cap B|$ is a completely general constraint that covers all valid configurations. In this phase, we do not strictly fix the \emph{exact} number of edges from $v_\ell$ to $A$ or $B$. Instead, the ILP solver is free to distribute the remaining $8-c_{\max}$ neighbors of $v_\ell$ between $A$ and $B$ to satisfy the structural conditions. This approach ensures no potential solutions are missed while significantly pruning the search space. We call this type a \emph{relaxed} formulation.
	
	\begin{figure}[ht]
		\begin{tikzpicture}[auto,on grid, state/.style ={circle, top color =black , bottom color = black , draw, minimum width=1mm}, inner sep=1.5pt, scale = 1]
			
			\node[state, label={[xshift=-5pt]left:$v_u$}] (vu) at (0, 0) {};
			
			\draw[line width=0.5mm] (4, 0) ellipse (2cm and 2.5cm);
			\node[below=1.25cm of 1] at (4, 4.5) {\large $A$};
			
			\draw[line width=0.5mm] (13, 0) ellipse (2cm and 2.5cm);
			\node[below=1.25cm of 1] at (13, 4.5) {\large $B$};
			
			\draw[line width=0.5mm] (5.95, -0.5) -- (11.05, -0.5);
			\node[] at (8.5, 0) {\large $\ldots$};
			\draw[line width=0.5mm] (5.95, 0.5) -- (11.05, 0.5);
			
			\coordinate (T1) at (2.75, 1.95);
			\coordinate (T2) at (2.75, -1.95);
			\draw[line width=0.5mm] (vu) -- (T1);
			\draw[line width=0.5mm] (vu) -- (T2);
			
			\node[state, label=above:$v_\ell$] (vl) at (13, 1.7) {};
			
			\draw[line width=0.5mm, dashed] (vl) -- (12.2, 0.7);
			\draw[line width=0.5mm, dashed] (vl) -- (12.7, 0.5);
			\draw[line width=0.5mm, dashed] (vl) -- (13.3, 0.5);
			\draw[line width=0.5mm, dashed] (vl) -- (13.9, 0.7);
            \draw[line width=0.2mm, dashed] (13, 0.4) ellipse (1.3cm and 0.75cm);
			\node at (13.0, 0.1) {\small $r - c_{\max}$};
			
		\end{tikzpicture}
		\caption{Refined partitioning with a second fixed vertex $v_\ell \in B$. Its neighborhood is distributed between $A$ and $B$, constrained by the common neighborhood size $c = |N(v_\ell) \cap N(v_u)| \leq c_{\max}$, obtained by Corollary~\ref{intersection-corollary}. Consequently, we have a lower bound on the neighbors in $B$: $|N(v_\ell) \cap B| = r - c \ge r - c_{\max}$.}\label{fig:part_bounds}
	\end{figure}
	
	For the hardest cases, we take this structural refinement a step further. Instead of relying solely on the relaxed lower bound constraint, we branch the ILP problem into several disjoint subproblems by fixing the \emph{exact} value $c$. This dictates that $v_\ell$ has exactly $c$ neighbors in $A$ and exactly $8 - c$ neighbors in~$B$. This strictly fixes $N(v_\ell)$ in the entire graph, see Figure~\ref{fig:part_exact}. While this approach requires launching the ILP solver separately for each exact value of $c$, it significantly reduces the solver's search tree for each individual run, making the hardest cases computationally tractable.
	
	\begin{figure}[ht]
		\begin{tikzpicture}[auto,on grid, state/.style ={circle, top color =black , bottom color = black , draw, minimum width=1mm}, inner sep=1.5pt, scale = 1]
			
			\node[state, label={[xshift=-5pt]left:$v_u$}] (vu) at (0, 0) {};
			
			\draw[dashed] (4, 0) ellipse (2cm and 2.5cm);
			\node[below=1.25cm of 1] at (4, 4.5) {\large $A$};
			
			\draw[line width=0.3mm] (4, 1.1) ellipse (1.5cm and 0.8cm);
			\node at (4, 1.35) {\small $N(v_\ell) \cap A$};
			\node at (4, 0.8) {\footnotesize $c$ vertices};
			
			\draw[line width=0.3mm] (4, -1.1) ellipse (1.5cm and 0.8cm);
			\node at (4, -0.85) {\small $A \setminus N(v_\ell)$};
			\node at (4, -1.4) {\footnotesize $r-c$ vertices};
			
			\draw[dashed] (13, 0) ellipse (2cm and 2.5cm);
			\node[below=1.25cm of 1] at (13, 4.5) {\large $B$};
			
			\draw[line width=0.3mm] (13, 0.6) ellipse (1.5cm and 0.8cm);
			\node at (13, 0.85) {\small $N(v_\ell) \cap B$};
			\node at (13, 0.3) {\footnotesize $r-c$ vertices};
			
			\draw[line width=0.3mm] (13, -1.2) ellipse (1.5cm and 0.8cm);
			\node at (13, -0.85) {\small $B \setminus N[v_\ell]$};
			\node[align=center] at (13, -1.55) {\footnotesize $n-2r-2+c$ \\[-1ex] \footnotesize vertices};
			 
			\draw[line width=0.5mm] (5.95, -0.5) -- (11.05, -0.5);
			\node[] at (8.5, 0) {\large $\ldots$};
			\draw[line width=0.5mm] (5.95, 0.5) -- (11.05, 0.5);
			
			\coordinate (T1) at (2.75, 1.95);
			\coordinate (T2) at (2.75, -1.95);
			\draw[line width=0.5mm] (vu) -- (T1);
			\draw[line width=0.5mm] (vu) -- (T2);
			
			\node[state, label=above:$v_\ell$] (vl) at (13, 1.9) {};
			
			\draw[line width=0.5mm] (vl) -- (5.3, 1.1);
			\draw[line width=0.5mm] (vl) -- (13, 1.2);
			
		\end{tikzpicture}
		\caption{Further refinement by fixing the exact common neighborhood size $c$. Thus we have $|N(v_\ell) \cap A| = c$ and $|N(v_\ell) \cap B| = 8 - c$.}\label{fig:part_exact}
	\end{figure}

	We now describe how the second fixed vertex $v_\ell$ is encoded in the ILP. In this paper we use the pairs of fixed vertices such that $v_uv_\ell \notin E(G)$, thus $v_\ell \in B$ (nevertheless, our generator~\cite{repo} supports fixing two adjacent vertices).
	
	The neighborhood of $v_\ell$ is distributed between $A$ and $B$. Let $c = |N(v_\ell) \cap A|$, so that $|N(v_\ell) \cap B| = r - c$. By Corollary~\ref{intersection-corollary} and Corollary~\ref{intersection-lower-bound-corollary}, we have explicit bounds for the common neighborhood $c = |N(v_\ell) \cap N(v_u)|$.
	
	In the \emph{relaxed} formulation, we only impose this lower bound: we fix $(r - c_{\max})$ specific vertices in $B$ as neighbors of $v_\ell$ and let the solver distribute the remaining $c_{\max}$ neighbors freely between $A$ and $B$:
	
	\begin{align*}
		x_{v_\ell,\, w} &=1, \quad \text{for each } w \in \{r + 2, \ldots, r + 1 + (r - c_{\max})\}.
	\end{align*}
	
	\[\sum_{\{w, w'\} \subseteq \{r+2, \ldots, r + 1 + (r - c_{\max})\}} x_{ww'} \le \ell.\]
	
	In the \emph{exact} formulation (used for the hardest subcases), we fix the precise value $c = |N(v_\ell) \cap A|$ and $r - c$ neighbors in $B$. This adds the following constraints:
	\begin{align*}
		x_{v_\ell,\, w} &=1, \quad \text{for each } w \in N(v_\ell), \\
		x_{v_\ell,\, w} &=0, \quad \text{for each } w \in G \setminus N[v_\ell].
	\end{align*}
	
	The $K_3$-degree of $v_\ell$ equals the number of edges among its neighbors:
	\[\sum_{\{w, w'\} \subseteq N(v_\ell) \\ w \neq w'} x_{ww'} = \ell.\]
	In particular, when $\ell = 0$, all neighbors of $v_\ell$ are pairwise non-adjacent: $x_{ww'}=0$ for all $\{w,w'\} \subseteq N(v_\ell)$.
	
	Fixing $v_\ell$ further refines the symmetry-breaking strategy. The sets $A$ and $B$ each split into two subgroups: those adjacent to $v_\ell$ and those not. We impose separate total orderings within each of the four resulting subgroups, and use pairwise Big-$M$ constraints between subgroups.
	
	\subsection[Final eliminations n = 20, 21, 22]{Final eliminations $\boldsymbol{n = 20, 21, 22}$}
    
	\medskip
	\noindent\textbf{Case $\boldsymbol{n=20}$.}
    The computational results and search tree statistics for this case are summarized in Table~\ref{table-20} and the subcase elimination DAG is illustrated in Figure~\ref{fig:tree_n20}. Starting from the full admissible range $\ran{0}{22}$, we first successfully eliminate $22$ using the standard single-vertex anchoring strategy, reducing the range to $\ran{0}{21}$.

    At this stage, single-vertex eliminations become intractable (the previous step with $22$ took about 93 hours). We pivot to the two-vertex strategy by simultaneously fixing the new maximum ($21$) and minimum ($0$). By Lemma~\ref{no-edge-lemma}, these extreme vertices are non-adjacent. We anchor $v_{21}$ and place $v_0$ in the non-neighborhood part~$B$. Infeasibility of this configuration branches the problem into two subcases:
	
	\begin{itemize}
		\item \emph{Subcase $\ran{1}{21}$}:
		Here, the extremes are $21$ and $1$, which by Lemma~\ref{no-edge-lemma} remain non-adjacent. SCIP proves their incompatibility, further splitting the range into two final branches:
		\begin{itemize}
			\item $\ran{2}{21}$: This range contains exactly $20 = n$ values with the sum $\sum_{k=2}^{21}k = 230 \equiv 2 \pmod{3}$, which violates the Triangle-Shake Corollary~\ref{triangle-shake-lemma}, immediately discarding this branch;
			\item $\ran{1}{20}$: This range also contains exactly $20$ values, but satisfies the divisibility condition $\left( \sum_{k=1}^{20}k = 210 \equiv 0 \pmod{3} \right)$, so we must verify this configuration directly. Using Lemma~\ref{no-edge-lemma} and Corollary~\ref{intersection-corollary}, we establish $c = |N(v_1) \cap N(v_{20})| \le 4$, which constrains $|N(v_1) \cap B| \ge 4$. SCIP confirms infeasibility of this relaxed model, closing the branch.
		\end{itemize}
		
		\item \emph{Subcase $\ran{0}{20}$}: Similarly, we fix the extremes $20$ and $0$. The solver proves their incompatibility, producing two branches:
		\begin{itemize}
			\item $\ran{1}{20}$: This branch is identical to the one resolved above;
			\item $\ran{0}{19}$: This range contains exactly $20$ values, although the sum $\sum_{k=0}^{19}k = 190 \equiv 1 \pmod{3}$, again violating Corollary~\ref{triangle-shake-lemma}. 
		\end{itemize}
	\end{itemize}

    This fully exhausts all possibilities for $n=20$. The structure of the subcase eliminations is illustrated as a directed acyclic graph (DAG) in Figure~\ref{fig:tree_n20}.
	
	\begin{figure}[H]
		\centering
		\begin{tikzpicture}[
			x=3.5cm, y=-2cm,
			every node/.style={rectangle, rounded corners, draw, align=center, fill=backcolour, minimum width=2.5cm, minimum height=0.8cm, font=\small},
			elab/.style={draw=none, fill=none, font=\footnotesize, text=black, minimum width=0, minimum height=0}
			]
			
			\node (n0_22) at (0, 0) {$\ran{0}{22}$};
			\node (n0_21) at (0, 1) {$\ran{0}{21}$};
			
			\node (n1_21) at (-0.8, 2) {$\ran{1}{21}$};
			\node (n0_20) at (0.8, 2) {$\ran{0}{20}$};
			
			\node (n2_21) at (-1.6, 3) [fill=gray!20] {$\ran{2}{21}$ \\[-0.5ex] \footnotesize $\Sigma \equiv 2 \pmod 3$};
			\node (n1_20) at (0, 3) {$\ran{1}{20}$ \\[-0.5ex] \footnotesize SCIP};
			\node (n0_19) at (1.6, 3) [fill=gray!20] {$\ran{0}{19}$ \\[-0.5ex] \footnotesize $\Sigma \equiv 1 \pmod 3$};
			
			\draw[-latex, thick] (n0_22) -- node[elab, right=0pt] {cut $22$} (n0_21);
			
			\draw[-latex, thick] (n0_21) -- node[elab, above left=-2pt] {cut $0$} (n1_21);
			\draw[-latex, thick] (n0_21) -- node[elab, above right=-2pt] {cut $21$} (n0_20);
			
			\draw[-latex, thick] (n1_21) -- node[elab, above left=-2pt] {cut $1$} (n2_21);
			\draw[-latex, thick] (n1_21) -- node[elab, above right=-2pt] {cut $21$} (n1_20);
			
			\draw[-latex, thick] (n0_20) -- node[elab, above left=-2pt] {cut $0$} (n1_20);
			\draw[-latex, thick] (n0_20) -- node[elab, above right=-2pt] {cut $20$} (n0_19);
			
		\end{tikzpicture}
		\caption{Subcase elimination DAG for $n=20$. Branches represent eliminating an extreme value (top or bottom) after showing its incompatibility. Terminal nodes (gray) are either resolved analytically ($\Sigma \not\equiv 0 \pmod 3$) or computationally via SCIP.}
		\label{fig:tree_n20}
	\end{figure}
    
    \begin{table}[htbp]
        \centering
        \begin{tabular}{@{}lccrrr@{}}
        \toprule
            Evaluated range & $|N(u) \cap N(v)|$ & Max Depth & Nodes & LP iter & Time (sec) \\
        \midrule
            \multicolumn{6}{@{}l}{\textbf{Case $\boldsymbol{n=20}$}} \\
            $\{0, \ldots, 21, \mathbf{22}\}$ & - & 66 & 9709635 & 3502M & 335291.39 (93h) \\
            $\{\mathbf{0}, 1, \ldots, 20, \mathbf{21}\}$ & $\le 3$ & 0 & 1 & 1387 & 0.12 \\
            $\{\mathbf{1}, 2, \ldots, 20, \mathbf{21}\}$ & $\le 4$ & 29 & 10716 & 5313k & 406.20 (7m) \\
            $\{\mathbf{0}, 1, \ldots, 19, \mathbf{20}\}$ & $\le 4$ & 30 & 38960 & 13064k & 811.72 (13.5m) \\
            $\{\mathbf{1}, 2, \ldots, 19, \mathbf{20}\}$ & $\le 4$ & 40 & 998849 & 349164k & 28300.84 (7.9h) \\
        \bottomrule
        \end{tabular}
        \caption{Computational results for $n = 20$.}\label{table-20}
    \end{table}
	
	\medskip
	\noindent\textbf{Case $\boldsymbol{n=21}$.}
    The computational results for this case are provided in Table~\ref{table-21} and the subcase elimination DAG is illustrated in Figure~\ref{fig:tree_n21}.
	Starting from the full range $\ran{0}{22}$, we immediately employ the two-vertex fixing strategy to show that $0$ and $22$ are incompatible.
	By Lemma~\ref{no-edge-lemma}, they are non-adjacent. By Corollary~\ref{intersection-corollary}, their common neighborhood size is bounded by $c = |N(v_0) \cap N(v_{22})| \le 3$. We anchor at $v_{22}$, place $v_0 \in B$, and launch SCIP using the \emph{relaxed} formulation. The solver quickly confirms infeasibility, splitting the search into two main branches: $\ran{1}{22}$ and $\ran{0}{21}$.

    For both of these new ranges, the difference between the extremes is $21$. By Corollary~\ref{intersection-corollary}, we again obtain the upper bound $c \le 3$. We show that their extremes are incompatible using the \emph{relaxed} formulation, which branches the search further.

    When we reach the ranges $\ran{2}{22}$, $\ran{1}{21}$, and $\ran{0}{20}$, the difference between the extremes is $20$. Here, the relaxed formulation becomes intractable. Therefore, we switch to the \emph{exact} formulation. For each pair of extremes, we use both upper and lower bounds on $c$. By Corollary~\ref{intersection-corollary}, $c \le 4$. By Corollary~\ref{intersection-lower-bound-corollary}, since $n=21$ and $|d_1-d_2| \ge 20$, we have $c \ge 2$. Thus, we explicitly split each problem into three \emph{exact} subcases by fixing $c \in \{2, 3, 4\}$. We launch SCIP separately for each configuration and confirm infeasibility. 
    
	This process is performed recursively until we reach ranges containing fewer than $n=21$ admissible values, triggering the pigeonhole principle stopping condition. The full structural breakdown is as follows:
	
	\begin{itemize}
		\item \emph{Subcase $\ran{1}{22}$}: We show that $1$ and $22$ are incompatible (using relaxed $c \le 3$), splitting into two cases. Note that both resulting ranges have exactly $n=21$ values, nevertheless, the divisibility condition is satisfied, so we have to proceed with SCIP further:
		\begin{itemize}
			\item $\ran{2}{22}$: we show that $2$ and $22$ are incompatible using exact $c \in \{2, 3, 4\}$;
			\item $\ran{1}{21}$: we show that $1$ and $21$ are incompatible using exact $c \in \{2, 3, 4\}$.
		\end{itemize}
		
		\item \emph{Subcase $\ran{0}{21}$}: We show that $0$ and $21$ are incompatible (using relaxed $c \le 3$), splitting into:
		\begin{itemize}
			\item $\ran{1}{21}$: already resolved just above;
			\item $\ran{0}{20}$: we show that $0$ and $20$ are incompatible using exact $c \in \{2, 3, 4\}$.
		\end{itemize}
	\end{itemize}
	
	\begin{figure}[H]
		\centering
		\adjustbox{max width=\textwidth}{
			\begin{tikzpicture}[
				x=2.5cm, y=-2cm,
				every node/.style={rectangle, rounded corners, draw, align=center, fill=backcolour, minimum width=2cm, minimum height=0.7cm, font=\footnotesize},
				elab/.style={draw=none, fill=none, font=\footnotesize, text=black, minimum width=0, minimum height=0}
				]
				
				\node (n0_22) at (0, 0) {$\ran{0}{22}$};
				
				\node (n1_22) at (-0.9, 1) {$\ran{1}{22}$};
				\node (n0_21) at (0.9, 1) {$\ran{0}{21}$};
				
				\node (n2_22) at (-1.8, 2) {$\ran{2}{22}$};
				\node (n1_21) at (0, 2) {$\ran{1}{21}$};
				\node (n0_20) at (1.8, 2) {$\ran{0}{20}$};
				
				\node (n3_22) at (-2.7, 3) [fill=gray!20] {$\ran{3}{22}$ \\[-0.5ex] \tiny $<n$};
				\node (n2_21) at (-0.9, 3) [fill=gray!20] {$\ran{2}{21}$ \\[-0.5ex] \tiny $<n$};
				\node (n1_20) at (0.9, 3) [fill=gray!20] {$\ran{1}{20}$ \\[-0.5ex] \tiny $<n$};
				\node (n0_19) at (2.7, 3) [fill=gray!20] {$\ran{0}{19}$ \\[-0.5ex] \tiny $<n$};
				
				\draw[-latex, thick] (n0_22) -- node[elab, above left=-2pt] {cut $0$} (n1_22);
				\draw[-latex, thick] (n0_22) -- node[elab, above right=-2pt] {cut $22$} (n0_21);
				
				\draw[-latex, thick] (n1_22) -- node[elab, above left=-2pt] {cut $1$} (n2_22);
				\draw[-latex, thick] (n1_22) -- node[elab, above right=-2pt] {cut $22$} (n1_21);
				
				\draw[-latex, thick] (n0_21) -- node[elab, above left=-2pt] {cut $0$} (n1_21);
				\draw[-latex, thick] (n0_21) -- node[elab, above right=-2pt] {cut $21$} (n0_20);
				
				\draw[-latex, thick] (n2_22) -- node[elab, above left=-2pt] {cut $2$} (n3_22);
				\draw[-latex, thick] (n2_22) -- node[elab, above right=-2pt] {cut $22$} (n2_21);
				
				\draw[-latex, thick] (n1_21) -- node[elab, above left=-2pt] {cut $1$} (n2_21);
				\draw[-latex, thick] (n1_21) -- node[elab, above right=-2pt] {cut $21$} (n1_20);
				
				\draw[-latex, thick] (n0_20) -- node[elab, above left=-2pt] {cut $0$} (n1_20);
				\draw[-latex, thick] (n0_20) -- node[elab, above right=-2pt] {cut $20$} (n0_19);
				
			\end{tikzpicture}
		}
		\caption{Subcase elimination DAG for $n=21$. Branches split by eliminating an extreme value after confirming incompatibility with SCIP. Merging paths demonstrate overlapping intermediate subcases. Terminal nodes all contain fewer than $n=21$ values and are closed by the pigeonhole principle.}
		\label{fig:tree_n21}
	\end{figure}

    \begin{table}[htbp]
        \centering
        \begin{tabular}{@{}lccrrr@{}}
        \toprule
            Evaluated range & $|N(u) \cap N(v)|$ & Max Depth & Nodes & LP iter & Time (sec) \\
        \midrule
            \multicolumn{6}{@{}l}{\textbf{Case $\boldsymbol{n=21}$}} \\
            $\{\mathbf{0}, 1, \ldots, 21, \mathbf{22}\}$ & $\le 3$ & 22 & 967 & 453850 & 38.43 \\
            $\{\mathbf{0}, 1, \ldots, 20, \mathbf{21}\}$ & $\le 3$ & 49 & 304139 & 83938k & 6155.96 (103m) \\
            $\{\mathbf{1}, 2, \ldots, 21, \mathbf{22}\}$ & $\le 3$ & 53 & 157123 & 64568k & 6010 (100m) \\
        \midrule
            $\{\mathbf{2}, 3, \ldots, 21, \mathbf{22}\}$ & $= 4$ & 0 & 1 & 1337 & 0.17 \\
            $\{\mathbf{2}, 3, \ldots, 21, \mathbf{22}\}$ & $= 3$ & 40 & 37266 & 15134k & 1254.22 (21m) \\
            $\{\mathbf{2}, 3, \ldots, 21, \mathbf{22}\}$ & $= 2$ & 23 & 3473 & 1519k & 124.22 \\
            \midrule
            $\{\mathbf{1}, 2, \ldots, 20, \mathbf{21}\}$ & $= 4$ & 0 & 29 & 12884 & 8.21 \\
            $\{\mathbf{1}, 2, \ldots, 20, \mathbf{21}\}$ & $= 3$ & 53 & 813005 & 312545k & 27515.05 (7.6h) \\
            $\{\mathbf{1}, 2, \ldots, 20, \mathbf{21}\}$ & $= 2$ & 22 & 2325 & 508279 & 46.22 \\
            \midrule
            $\{\mathbf{0}, 1, \ldots, 19, \mathbf{20}\}$ & $= 4$ & 0 & 36 & 16828 & 5.63 \\
            $\{\mathbf{0}, 1, \ldots, 19, \mathbf{20}\}$ & $= 3$ & 41 & 286809 & 95905k & 6267.07 (104m) \\
            $\{\mathbf{0}, 1, \ldots, 19, \mathbf{20}\}$ & $= 2$ & 19 & 1236 & 325902 & 22.06 \\
        \bottomrule
        \end{tabular}
        \caption{Computational results for $n = 21$.}\label{table-21}
    \end{table}
	
	\medskip
	\noindent\textbf{Case $\boldsymbol{n=22}$.} The computational results for this case are provided in Table~\ref{table-22} and the branching strategy is illustrated in Figure~\ref{fig:tree_n22}.
	Starting from the full range $\ran{0}{22}$, we employ the two-vertex fixing strategy to show that $0$ and $22$ are incompatible. By Lemma~\ref{no-edge-lemma}, they are non-adjacent. Because this case is computationally very demanding, we immediately utilize the \emph{exact} formulation. By Corollary~\ref{intersection-corollary}, $c \le 3$, and by Corollary~\ref{intersection-lower-bound-corollary}, since $n=22$ and $|d_1-d_2|=22 \ge 21$, we have $c \ge 2$. Thus, we explicitly split the problem into two exact subcases by fixing $c \in \{2, 3\}$. SCIP confirms infeasibility for both configurations, splitting the main search into two branches: $\ran{1}{22}$ and $\ran{0}{21}$.

    The branch $\ran{1}{22}$ contains exactly $22$ values, with sum $\sum_{k=1}^{22}k = 253 \equiv 1 \pmod{3}$. This violates the Triangle-Shake Lemma (Corollary~\ref{triangle-shake-lemma}) and is thus closed analytically.
    
	We proceed only with the branch $\ran{0}{21}$. This is the final hurdle, containing exactly $n=22$ values and satisfying divisibility. We show that $0$ and $21$ are incompatible. Again, we use the exact formulation. The difference is $|21-0| = 21$, so Corollaries~\ref{intersection-corollary} and~\ref{intersection-lower-bound-corollary} tightly bound the common neighborhood to $c \in \{2, 3\}$. We anchor at $v_{21}$ and launch the final SCIP runs for these two exact configurations. Both reach infeasibility.

    This was computationally the hardest case evaluated under the two-vertex fixing strategy, with the exact subcase $c=3$ for $\ran{0}{21}$ requiring the longest individual SCIP run among all two-vertex models (66 hours, see Table~\ref{table-22}). We remark that our initial evaluation of $n=20$ using only a single fixed anchor required over 93 hours of computation just to eliminate the first boundary value (see Table~\ref{table-20}). Encountering such severe computational bottlenecks was precisely what motivated the development of the advanced two-vertex strategy and the exact subcase splitting employed throughout these larger cases.
    
	\begin{figure}[H]
		\centering
		\begin{tikzpicture}[
			x=2.5cm, y=-2cm,
			every node/.style={rectangle, rounded corners, draw, align=center, fill=backcolour, minimum width=2.5cm, minimum height=0.8cm, font=\small},
			elab/.style={draw=none, fill=none, font=\footnotesize, text=black, minimum width=0, minimum height=0}
			]
			
			\node (n0_22) at (0, 0) {$\ran{0}{22}$};
			
			\node (n1_22) at (-1.0, 1) [fill=gray!20] {$\ran{1}{22}$ \\[-0.5ex] \footnotesize $\Sigma \equiv 1 \pmod 3$};
			\node (n0_21) at (1.0, 1) {$\ran{0}{21}$};
			
			\node (n1_21) at (0, 2) [fill=gray!20] {$\ran{1}{21}$ \\[-0.5ex] \footnotesize $<n$};
			\node (n0_20) at (2.0, 2) [fill=gray!20] {$\ran{0}{20}$ \\[-0.5ex] \footnotesize $<n$};
			
			\draw[-latex, thick] (n0_22) -- node[elab, above left=-2pt] {cut $0$} (n1_22);
			\draw[-latex, thick] (n0_22) -- node[elab, above right=-2pt] {cut $22$} (n0_21);
			
			\draw[-latex, thick] (n0_21) -- node[elab, above left=-2pt] {cut $0$} (n1_21);
			\draw[-latex, thick] (n0_21) -- node[elab, above right=-2pt] {cut $21$} (n0_20);
			
		\end{tikzpicture}
		\caption{Subcase elimination tree for $n=22$. After evaluating the primary case, only one valid branch requires further computation, rapidly terminating in subcases with fewer than $n$ values.}
		\label{fig:tree_n22}
	\end{figure}

    \begin{table}[htbp]
        \centering
        \begin{tabular}{@{}lccrrr@{}}
        \toprule
            Evaluated range & $|N(u) \cap N(v)|$ & Max Depth & Nodes & LP iter & Time (sec) \\
        \midrule
            \multicolumn{6}{@{}l}{\textbf{Case $\boldsymbol{n=22}$}} \\
            $\{\mathbf{0}, 1, \ldots, 21, \mathbf{22}\}$ & $= 3$ & 50 & 15091 & 4830k & 382.98 (6m) \\
            $\{\mathbf{0}, 1, \ldots, 21, \mathbf{22}\}$ & $= 2$ & 45 & 72966 & 26745k & 2209.35 (37m) \\
            \midrule
            $\{\mathbf{0}, 1, \ldots, 20, \mathbf{21}\}$ & $= 3$ & 66 & 8250890 & 2861M & 237516.73 (66h) \\
            $\{\mathbf{0}, 1, \ldots, 20, \mathbf{21}\}$ & $= 2$ & 62 & 2322809 & 825030k & 64374.04 (18h) \\
        \bottomrule
        \end{tabular}
        \caption{Computational results for $n = 22$.}\label{table-22}
    \end{table}
	
	\section{Concluding remarks and open problems}\label{sec:conclusion}
	
	The exhaustive computational search presented in this paper establishes that there are no $8$-regular $K_3$-irregular graphs, thereby proving Theorem~\ref{thm-main}. Together with previous findings, this completely resolves the existence problem for all $r \le 8$, pushing the lower bound on the regularity of such graphs to $r = 9$. Thus, we have established Theorem~\ref{thm-threshold}, confirming that an $r$-regular $K_3$-irregular graph exists if and only if $r \ge 9$.
	
	In a recent study~\cite{Hak:25}, the first known example of a $9$-regular $K_3$-irregular graph was found, it has $n=24$ vertices. With the non-existence result for $r \leq 8$ confirmed, a natural open problem is to determine the absolute minimum order $n$ for which a regular $K_3$-irregular graph exists.

    Remarkably, our non-existence result for $r \le 8$ immediately implies a strict lower bound on the order of \emph{any} regular $K_3$-irregular graph. It is a known fact that the complement of an $r$-regular $K_3$-irregular graph on $n$ vertices is an $(n - 1 - r)$-regular $K_3$-irregular graph~\cite{Hak:25,Nair:94}. Thus, we can always assume without loss of generality that $r \le \lfloor (n-1)/2 \rfloor$. For $n \le 19$, this implies $r \le 9$. However, $r$ cannot be $9$ for odd $n \le 19$ by the handshaking lemma, and if $n \le 18$ is even and $r=9$, its complement has regularity $n - 1 - 9 \le 8$, which we have proven cannot exist. 

    \begin{theorem}\label{thm-n-bound}
        A regular $K_3$-irregular graph on $n$ vertices exists only if $n \ge 20$.
    \end{theorem}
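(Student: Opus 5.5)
We argue by contradiction: suppose $G$ is an $r$-regular $K_3$-irregular graph on $n \le 19$ vertices. The plan is to reduce everything to the case $r\le 8$, where nonexistence is known: Theorem~\ref{thm-main} covers $r=8$ and the result of~\cite{Hak:25} covers $r\le 7$. Note that $n\ge 2$ by convention, and indeed the relevant small cases are already excluded by the $r\le 7$ result.

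The first step is complementation. The complement $\overline{G}$ is $(n-1-r)$-regular, and every vertex $v$ has $K_3\deg_{\overline{G}}(v)$ determined by $K_3\deg_G(v)$ through a formula depending only on $n$ and $r$. Concretely, count non-edges inside $N_G(v)$ and edges inside $V\setminus N_G[v]$ using the partition counts of Section~\ref{sec:part-tech}: $|E(B)|=\frac{nr}{2}-r^2+d$. This is an affine function of $d=K_3\deg_G(v)$ with slope $1$. Hence the triangle degrees in $\overline{G}$ are pairwise distinct as well, so $\overline{G}$ is also regular and $K_3$-irregular, which is the known fact cited from~\cite{Hak:25,Nair:94}. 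Replacing $G$ by $\overline{G}$ if necessary, we may therefore assume $r\le n-1-r$, that is, $r\le \lfloor (n-1)/2\rfloor\le 9$.

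The second step is the case analysis on $r$.
\begin{itemize}
\item If $r\le 8$, we get a contradiction immediately from Theorem~\ref{thm-main} and~\cite{Hak:25}.
\item If $r=9$, then $\lfloor (n-1)/2\rfloor\ge 9$ forces $n\ge 19$, so $n=19$. But then $nr=171$ is odd, contradicting the handshaking lemma.
\end{itemize}
(Even without the reduction, an even $n\le 18$ with $r=9$ would give a complement of regularity $n-10\le 8$, which is already excluded.) This completes the argument.

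The only step needing real care is the complementation invariance, i.e.\ verifying that $K_3\deg_{\overline{G}}(v)=\binom{n-1-r}{2}-|E(B)|$ is a shift of $d$ by a constant independent of $v$. Everything else is bookkeeping over the finitely many values of $r$.
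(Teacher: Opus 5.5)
Your proof is correct and is essentially the paper's argument: pass to the complement to get $r\le\lfloor (n-1)/2\rfloor\le 9$, apply Theorem~\ref{thm-main} and~\cite{Hak:25} for $r\le 8$, and rule out $r=9$, $n=19$ by parity. You also justify the complementation fact via the partition counts, where strictly $K_3\deg_{\overline{G}}(v)$ is a constant minus $d$ rather than a shift of $d$, which is still injective; and you rightly note that the paper's separate handling of even $n\le 18$ with $r=9$ is already covered by the reduction.
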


    This leaves us with a fascinating dichotomy regarding the absolute smallest regular $K_3$-irregular graph. Stevanovi\'c et al.~\cite{reg-triangle:24} constructed an example for $r=10$ on $n=21$ vertices. If a $9$-regular $K_3$-irregular graph on $n=20$ vertices exists, then it (and its $10$-regular complement) would be the absolute smallest regular $K_3$-irregular graph in terms of both order ($n=20$) and regularity ($r=9$).

    If, however, no such graph exists for $n=20$, then the minimum order and minimum regularity are achieved by different graphs: the minimum order would be exactly $n=21$ (realized by the $10$-regular graph from~\cite{reg-triangle:24}), while the minimum regularity would remain $r=9$, realized by a graph on $n=22$ vertices (if it exists) or our known example on $n=24$ vertices~\cite{Hak:25}.
	
    Consequently, resolving the existence of $9$-regular $K_3$-irregular graphs on $n=20$ and $n=22$ vertices is the final step to completely determining the smallest regular $K_3$-irregular graphs. We note that the search space for these cases is vastly larger than the cases resolved in this paper, requiring significant computational resources or further theoretical breakthroughs. We leave this as an open computational challenge for future research.
    
	\vspace{0.5cm}
	
	{\bf Acknowledgements}
	
	\vspace{0.5cm}
	
	The authors thank the Ukrainian Armed Forces for keeping Leliukhivka and Kyiv safe, which gave us the opportunity to work on this paper. We are also grateful to Mykola Korabliov for suggesting the use of integer linear programming to tackle this problem.
	
	\vspace{0.5cm}
	{\bf Declaration of Generative AI and AI-assisted technologies in the writing process}
	
	During the preparation of this work, the authors used assistants Large Language Model (LLM)-based assistants (Gemini and Claude) in order to improve language readability, and the overall flow of mathematical prose. After using these tools, the authors reviewed and edited the content as needed and take full responsibility for the content of the publication.
	
	

	

\begin{thebibliography}{10}
		\footnotesize\itemsep=0pt
		\providecommand{\url}[1]{#1}
		\providecommand{\urlprefix}{}
		\providecommand{\eprint}[2][]{\href{https://arxiv.org/abs/#2}{arXiv:#2}}
		
		\bibitem{dist-triangle:2024}
		Berikkyzy Z., Bjorkman B., Blake H.S., Jahanbekam S., Keough L., Moss K., Rorabaugh D., Shan S., Triangle-degree and triangle-distinct graphs, \href{https://doi.org/10.1016/j.disc.2023.113695}{\textit{Discrete Math.}} \textbf{347} (2024), Paper No. 113695, 8.
		
		\bibitem{BoostGraphLib}
		{Boost Development Team}, {Boost Graph Library}, 2024, version 1.87, accessed July 15, 2025, \urlprefix\url{https://www.boost.org/doc/libs/1_87_0/libs/graph/doc/index.html}.
		
		\bibitem{Chartrand2016}
		Chartrand~G., Highly irregular, in Graph theory---favorite conjectures and open problems.~1, \textit{Probl. Books in Math.}, \href{https://doi.org/10.1007/978-3-319-31940-7_1}{Springer}, Cham, 2016, 1--16.
		
		\bibitem{Char-Erd-Oell:88}
		Chartrand~G., Erd\H{o}s~P., Oellermann~O.R., How to define an irregular graph, \href{https://doi.org/10.2307/2686701}{\textit{College Math. J.}} \textbf{19} (1988), 36--42.
		
		\bibitem{Char:87}
		Chartrand~G., Holbert~K.S., Oellermann~O.R., Swart~H.C., {$F$}-degrees in graphs, \textit{Ars Combin.} \textbf{24} (1987), 133--148.
		
		\bibitem{SCIP}
		Hojny~C. et al. \textit{The SCIP Optimization Suite 10.0.} Technical Report. Optimization Online, Nov. 2025. URL: \url{https://optimization-online.org/2025/11/the-scip-optimization-suite-10-0/}.
		
		\bibitem{ZenodoK3Irregular}
		A.~Hak, S.~Kozerenko, A.~Serdiuk, EvolutionaryGraphs (2025). \newblock \href {https://doi.org/10.5281/zenodo.21877760} {\path{doi:10.5281/zenodo.21877760}}.
		
		\bibitem{repo}
		Hak~A., regular-{K3}-irregular-{ILP}, source code repository, 2026, \urlprefix\url{https://github.com/artikgak/regular-K3-irregular-ILP}.
		
		\bibitem{HakEndOfYear:24}
		Hak~A., A search for regular $K_3$-irregular graphs, \textit{Ukraine Mathematics Conference “At the End of the Year 2024”}  (2024), p. 30, available at \url{https://sites.google.com/knu.ua/aey2024/abstracts}.
		
		\bibitem{Hak:25}
		Hak~A., Kozerenko~S., Serdiuk~A., Regular $K_3$-irregular graphs, accepted for publication in \textit{Discrete Appl. Math.}, 2026, \eprint{2507.18776}.

        \bibitem{HakReg:25}
		Hak~A., Kozerenko~S., Lohvynov~D., Yarosh~Y., Regular $K_3$-regular graphs, accepted for publication in \textit{Discuss. Math. Graph Theory}, 2026, \eprint{2602.23517}.

		\bibitem{CaroMifsud2025}
		Caro~Y., Mifsud~X., On $(r,c)$-constant, planar and circulant graphs, \href{https://doi.org/10.7151/dmgt.2551}{\textit{Discuss. Math. Graph Theory}} \textbf{45} (2025), 707--723.
		
		\bibitem{Caro2024}
		Caro~Y., Lauri~J., Mifsud~X., Yuster~R., Zarb~C., Flip colouring of graphs, \href{https://doi.org/10.1007/s00373-024-02838-w}{\textit{Graphs Combin.}} \textbf{40} (2024), article no.~110, 24~pages.
		
		\bibitem{Jajcay2025}
		Jajcay~R., Jooken~J., Porups\'{a}nszki~I., On vertex-girth-regular graphs: (non-)existence, bounds and enumeration, \href{https://doi.org/10.37236/13333}{\textit{Electron. J. Combin.}} \textbf{32} (2025), paper no.~P4.51, 27~pages.
        
		\bibitem{Zhang:25}
		Zhang~Z., Regular $K_3$-irregular graphs of every regularity at least nine, preprint, 2025, available at \url{https://ssrn.com/abstract=7415478}.
        
		\bibitem{Nair:94}
		Nair~B.R., Vijayakumar~A., About triangles in a graph and its complement, \href{https://doi.org/10.1016/0012-365X(94)90385-9}{\textit{Discrete Math.}} \textbf{131} (1994), 205--210.
		
		\bibitem{Nair:96}
		Nair~B.R., Vijayakumar~A., Strongly edge triangle regular graphs and a conjecture of {K}otzig, \href{https://doi.org/10.1016/0012-365X(95)00077-A}{\textit{Discrete Math.}} \textbf{158} (1996), 201--209.
		
		\bibitem{siek2001boost}
		Siek~J.G., Lee~L.Q., Lumsdaine~A., The Boost Graph Library: User Guide and Reference Manual, C++ in Depth Series, Addison-Wesley, 2001.
		
		\bibitem{reg-triangle:24}
		Stevanovi\'c~D., Ghebleh~M., Caporossi~G., Vijayakumar~A., Stevanovi\'c~S., On regular triangle-distinct graphs, \href{https://doi.org/10.1007/s40314-024-02854-9}{\textit{Comput. Appl. Math.}} \textbf{43} (2024), article no.~336, 19~pages.
		
	\end{thebibliography}
\end{document}